\newcommand{\RR}{\mathbb{R}}
\newcommand{\dom}{{\mathrm{dom}}\,} % domain
\newcommand{\cX}{{\mathcal{X}}}
\newcommand{\cY}{{\mathcal{Y}}}
\newcommand{\cI}{{\mathcal{I}}}
\newcommand{\cJ}{{\mathcal{J}}}
\newcommand{\cN}{{\mathcal{N}}}
\newcommand{\rint}{{{\rm int}\,}}
\newcommand{\cT}{{\mathcal{T}}}
\newcommand{\cQ}{{\mathcal{Q}}}
\newtheorem{theorem}{Theorem}[section]
\newtheorem{lemma}{Lemma}[section]
\newtheorem{definition}{Definition}[section]
\newtheorem{example}{Example}[section]
\newtheorem{corollary}{Corollary}[section]
\newtheorem{proposition}{Proposition}[section]
\begin{document}

\title{Proximal-Like Incremental Aggregated Gradient Method with Linear Convergence under Bregman Distance Growth Conditions}

\author{Hui Zhang\thanks{
Department of Mathematics, National University of Defense Technology,
Changsha, Hunan 410073, China.  Email: \texttt{h.zhang1984@163.com}}
\and Yu-Hong Dai\thanks{LSEC, ICMSEC,
Chinese Academy of Sciences. Email: \texttt{dyh@lsec.cc.ac.cn}
}
\and Lei Guo\thanks{Corresponding author. School of Business, East China University of Science and Technology, Shanghai 200237, China. Email: \texttt{lguo@ecust.edu.cn}
}
\and Wei Peng\thanks{Department of Mathematics, National University of Defense Technology,
Changsha, Hunan 410073, China.}
}

%\address[rvt]{Department of Mathematics and Systems Science,
%College of Science, National University of Defense Technology,
%Changsha, Hunan, 410073, P.R.China}
%\address[els]{Department of Mathematics, University of Iowa, Iowa City, IA 52242, USA}
%%\address[focal]{Yanwachi Main street 47, Changsha, Hunan, China}

\date{\today}

\maketitle

\begin{abstract}
We introduce a unified algorithmic framework, called proximal-like incremental aggregated gradient (PLIAG) method, for minimizing the sum of a convex function that consists of additive relatively smooth convex components and a proper lower semi-continuous convex regularization function, over an abstract feasible set whose geometry can be captured by using the domain of a Legendre function. The PLIAG method includes many existing algorithms in the literature as special cases such as the proximal gradient method, the Bregman proximal gradient method (also called NoLips algorithm), the incremental aggregated gradient method, the incremental aggregated proximal method, and the proximal incremental aggregated gradient method. It also includes some novel interesting iteration schemes. First we show the PLIAG method is globally sublinearly convergent without requiring a growth condition, which extends the sublinear convergence result for the proximal gradient algorithm to incremental aggregated type first order methods. Then by embedding a so-called Bregman distance growth condition into a descent-type lemma to construct a special Lyapunov function, we show that the PLIAG method is globally linearly convergent in terms of both function values and Bregman distances to the optimal solution set, provided that the step size is not greater than some positive constant. These convergence results derived in this paper are all established beyond the standard assumptions in the literature (i.e., without requiring the strong convexity and the Lipschitz gradient continuity of the smooth part of the objective). When specialized to many existing algorithms, our results recover or supplement their convergence results under strictly weaker conditions.

%recover many existing convergence results beyond the standard assumptions in the literature (i.e., without requiring the strong convexity of the smooth part of the objective and the Lipschitz continuity of the gradient of the smooth part), and moreover, they provide convergence results for some novel iteration schemes.
\end{abstract}

\textbf{Keywords.} Incremental aggregated gradient, linear convergence, Lipschitz-like/convexity, relative smoothness, Bregman distance growth.
\newline

\textbf{AMS subject classifications.} 90C25,  65K05.

%% \linenumbers

%% main text\
\section{Introduction}
In this paper, we consider the following convex minimization problem:
\begin{equation}\label{main}
\min_{x\in \mathcal{Q}}\quad \Phi (x):= F(x)+h(x),
\end{equation}
where $F(x):=\sum_{n=1}^N f_n(x)$ is a convex function whose component functions $f_n:\RR^d\to \RR$ are convex functions that are smooth on the interior of $\mathcal{Q}$, $h:\RR^d\to(-\infty,\infty]$ is a proper, lower semi-continuous (lsc), and  convex regularization function that is possibly non-smooth, and $\mathcal{Q}\subseteq \RR^d$ is a nonempty closed convex subset with a nonempty interior. Many problems in machine learning, signal and image processing, compressed sensing, statistical estimation, communication systems, and distributed optimization can be modelled into this form.

\subsection{Existing algorithmic schemes}\label{subse:algo}
The well-known method to solve \eqref{main} with $\mathcal{Q}=\RR^d$ is the proximal gradient (PG) method:
\begin{equation}\label{PG}
x_{k+1}=\arg\min_{x\in \RR^d}\Big\{h(x)  + \langle \nabla F(x_k),x-x_k\rangle +\frac{1}{2\alpha_k }\|x-x_k\|^2 \Big\},
\end{equation}
where $\alpha_k$ is a positive step size; see, e.g., \cite{Beck-Teb}. In the more general setting of maximal monotone operators, the PG method is also called the forward-backward splitting method \cite{Lions1979Splitting,Passty1979Ergodic}.
%That is,  \eqref{PG} can be equivalently written as
%\begin{align*}
%& y_k=x_k-\alpha_k \cdot \nabla F(x_k), \\
%& x_{k+1}=\arg\min_{x\in \RR^d}\left\{h(x)+\frac{1}{2\alpha_k}\|x-y_k\|^2\right\}.
%\end{align*}
The main merit of the PG method lies in exploiting the smooth plus non-smooth structure of the model. However, when the number of component functions, $N$, is very large, which happens in big data models, evaluating the full gradient of $F$ at some point $x$, $\nabla F(x)=\sum_{n=1}^N \nabla f_n(x)$, is costly and even prohibitive. One efficient approach to overcome this difficulty is to approximate $\nabla F$ by using a single component gradient at each iteration in a cyclic or randomized order \cite{Bertsekas2011a,Bertsekas2011b}.

% which frequently happens in big data science,  large-scale datasets

To provide a better approximation of $\nabla F$, the proximal incremental aggregated gradient (PIAG) method was studied in  \cite{Global2016Vanli,Analysis2016Aytekin,A2016Vanli}. By exploiting the additive structure of the $N$ component functions, at each iteration $k\geq 0$ the PIAG method first constructs a vector that aggregates the gradients of all component functions, evaluated at the $(k-\tau_k^n)$-th iteration, i.e.,
$g_k=\sum_{n=1}^N\nabla f_n(x_{k-\tau_k^n}),$
where $\tau_k^n$ are delayed indexes, to approximate the full gradient of $F$ at the current iteration point $x_k$. After obtaining the approximation $g_k$ of $\nabla F(x_k)$, the PIAG method performs a proximal step on the sum of $h(x)$ and $\langle g_k, x-x_k\rangle$ as follows:
\begin{equation*}\label{xk}
x_{k+1}=\arg\min_{x\in \RR^d}\Big\{ h(x)+\langle g_k, x-x_k\rangle  + \frac{1}{2\alpha_k}\|x-x_k\|^2\Big\}.
\end{equation*}
%By introducing an auxiliary vector, we can rewrite the PIAG method into the following scheme:
%\begin{align*}
%& y_k=x_k-\alpha_k \cdot g_k, \\
%& x_{k+1}=\arg\min_{x\in \RR^d}\left\{h(x)+\frac{1}{2\alpha_k}\|x-y_k\|^2\right\}.
%\end{align*}
One can see that the PIAG method differs from the PG method mainly at the approximation of $\nabla F(x_k)$, and these two methods are the same when $\tau_k^n\equiv 0$. In this sense, the PIAG method can be viewed as a generalization of the PG method. In addition, when the regularization function $h$ vanishes, the PIAG method reduces to the incremental aggregated gradient (IAG) method proposed in \cite{A2008Blatt}.
%\begin{equation}\label{var01}
%x_{k+1}=x_k-\alpha_k \cdot \sum_{n=1}^N\nabla f_n(x_{k-\tau_k^n}),
%\end{equation}
%which was proposed in \cite{A2008Blatt} for minimizing $F(x)=\sum_{n=1}^N f_n(x)$.
Recently, different from the IAG method, Bertsekas in \cite{Bertsekas2015incre}  proposed the incremental aggregated proximal (IAP) method
%\begin{equation}\label{var1}
%x_{k+1}=x_k-\alpha_k (\nabla f_{i_k}(x_{k+1})+\sum_{i\neq i_k}\nabla f_i(x_{k-\tau_k^i})),
%\end{equation}
%where $i_k\in \{1,\ldots,N\}$ is chosen arbitrarily, which is an implicit scheme since $x_{k+1}$ is involved in $\nabla f_{i_k}(x_{k+1})$.
%It is straightforward to verify that the IAP method has the following equivalent form
\begin{equation}\label{subp1}
x_{k+1}=\arg\min_{x\in \RR^d}\Big\{ f_{i_k}(x)+ \Big\langle \sum_{i\neq i_k}\nabla f_i(x_{k-\tau_k^i}),x-x_k \Big \rangle +\frac{1}{2\alpha_k}\|x-x_k\|^2 \Big\}
\end{equation}
where $i_k\in \{1,\ldots,N\}$ is chosen arbitrarily. The iteration scheme \eqref{subp1} follows the spirit of the proximal point algorithm (see \cite{Rockafellar1976mono}) to keep one of the component functions at each iteration.

When the feasible set $\cQ$ is a proper subset in $\RR^n$, by using the indictor function $\delta_\cQ(x)$, which is equal to 0 when $x\in \cQ$ and $\infty$ otherwise, problem \eqref{main} is equivalent to the problem of minimizing $F(x)+\widetilde{h}(x)$, where $\widetilde{h}(x):=h(x)+\delta_\cQ(x)$.
%, i.e.,
%\begin{equation*}
%\min_{x\in \RR^d}\quad \Phi (x):= F(x)+\tilde{h}(x).
%\end{equation*}
The PG-type methods with the Euclidean distance can be directly applied to this formulation, but with a potential difficulty of computing the proximal step on $\tilde{h}$. An alternative approach of handling the feasible set is to choose an associated Legendre function $w$ and use its Bregman distance $D_w(\cdot,\cdot)$ as a proximity measure; see, e.g., \cite{Bauschke2016A}. Using this Bregman distance to replace the Euclidean distance $\frac{1}{2\alpha}\|x-x_k\|^2$ in \eqref{PG}, we obtain the so-called NoLips algorithm proposed in \cite{Bauschke2016A} (or the BPG method in \cite{Teboull2018A}):
\begin{equation}
x_{k+1}=\arg\min_{x\in \mathcal{Q}}\Big\{h(x)+\langle \nabla F(x_k), x-x_k\rangle+ \frac{1}{\alpha_k}D_w(x,x_k)\Big\}.
\end{equation}
A summary for some popular choices of $w$ can be found in \cite[Example 1]{Bauschke2016A}. As noted in \cite{Bauschke2016A}, the framework of the NoLips algorithm is not new and it follows the line of mirror descent methods (see, e.g.,\cite{Nemirovski1983Yudin,beck2003fast,Tseng2010,Beck2017First}). It should be noted that the foremost aspect of the NoLips algorithm is to weaken the assumption of Lipschitz gradient  continuity by utilizing the geometry of the constraint set $\cQ$. Using the same idea of replacing the Euclidean distance with a non-Euclidean one, a non-quadratic IAG method was recently proposed  in \cite{Bertsekas2015incre}, but leaving the issue of convergence analysis open.

In this paper, we will propose a unified algorithmic framework that recovers all the iteration schemes mentioned above, and give a simple and unified convergence analysis under mild assumptions. This framework is not a simple unification and it also includes some new interesting  algorithms that will be presented in Section \ref{se3}.

\subsection{Related convergence analysis}

To the best of our knowledge, the IAG method is the first incremental gradient method that uses the aggregated delayed gradients to provide a better approximation of the full gradient of $F$ than using a single component gradient. Although it is a first order gradient-type method, both the global convergence and globally linear convergence are not easy to establish. In \cite{A2008Blatt}, the authors only established the global convergence under some restrictive assumptions and the linear convergence for the case where component functions are quadratic, assuming that the delayed indexes $\tau_k^n$ satisfy certain restrictions. Subsequently, the linear convergence result was established for nonquadratic problems and for various forms of the method \cite{mairal2015Maj,Schmidt2017Minimizing}. Recently, by viewing the IAG iteration as a gradient method with errors and using distances of the iterates to the optimal solution as a Lyapunov function, Gurbuzbalaban et al.  in \cite{Gurbuzbalaban2015on} derived the first linear convergence for the IAG method with allowing the use of arbitrary
indexes $\tau_k^n$ less than a given positive constant. As pointed out by Bertsekas in  \cite{Bertsekas2015incre}, their proof idea is elegant and particularly simple, and can be employed to show the globally linear convergence of the IAP method, but not readily extended to handle the constrained or non-smooth composite cases.

Vanli et al. first established the global convergence rate of the PIAG method in \cite{Global2016Vanli}.  In contrast with the technique in \cite{Gurbuzbalaban2015on}, their convergence analysis uses function values to track the evolution of the iterates generated by the PIAG method, and their results improve the convergence rate of the IAG method.  By introducing a key lemma that characterizes the linear convergence of a Lyapunov function, Aytekin et al. showed that the iterates generated by the PIAG method is globally linearly convergent in \cite{Analysis2016Aytekin}. Later on, by combining the results presented in \cite{Global2016Vanli} and \cite{Analysis2016Aytekin}, a stronger linear convergence rate in terms of function values for the PIAG method  was given in \cite{A2016Vanli}.

In order to guarantee first order methods to converge linearly, one of standard assumptions is the strong convexity of the objective function which, however, may be too stringent in practice. Recently, linear convergence of first order methods for non-strongly convex minimization problems was derived under error bound conditions; see the pioneer paper \cite{Bolte2015From} and other followed papers  \cite{Zhang2015The,I2015Linear,Karimil2016linear,zhang2016new}. We wonder whether one can establish linear convergence for the IAP and PIAG methods without the strong convexity, which is a common assumption used in \cite{Gurbuzbalaban2015on,Bertsekas2015incre,Global2016Vanli,Analysis2016Aytekin,A2016Vanli}. This is also one of our motivations for this work.

Another required assumption in the convergence analysis for first order methods is the Lipschitz gradient continuity of the smooth part of the objective. This assumption is also stringent in practice.  A so-called Lipschitz-like/convexity condition was first introduced in \cite{Bauschke2016A} to circumvent the intricate question of Lipschitz continuity of the gradient and was then used under the term ``relative smoothness" in \cite{Lu2016relatively}.
%An equivalent relative smoothness condition was proposed in \cite{Lu2016relatively}.
Bolte et al. further developed this idea to analyze the convergence of nonconvex composition minimization problems in \cite{bolte2017first}.
Our last but not least motivation is to extend their theory to our proposed algorithm.
\subsection{Contributions}

Our contributions made in this paper are three-fold, which are summarized as follows:
\begin{itemize}
\item \textbf{Algorithmic aspect.} We propose a unified algorithmic framework that includes all the methods mentioned in Subsection 1.1 as special cases. There are three ingredients in the proposed framework: incremental aggregation (gradient or proximal), regularization function, and Bregman's distance. It is the first time to put them together although none of them is new. From the algorithmic aspect, the main benefit is that we can collect many related algorithmic schemes  in the literature into a unified form. It also includes some novel interesting schemes due to certain combinations of the three ingredients such as the non-Euclidean PIAG and IAP methods presented in Section \ref{se3}.
    %, though some of them are difficult to implement in practice.

\item \textbf{Theoretical aspect.} We establish globally sublinear and linear convergence results for the PLIAG method beyond the standard assumptions in the literature (i.e, without requiring the strong convexity and the Lipschitz gradient continuity of $F$). To the best of our knowledge, we establish the first sublinear convergence for incremental aggregated type first order methods with positive delayed parameters, including the IAP and PIAG methods as special cases.
    %This is new even when specialized to the IAP method since a global sublinear convergence result is derived under only a Lipschitz-like/convexity condition instead of the used strong convexity.
   When specialized to many existing algorithms, our results recover or supplement their convergence results under strictly weaker conditions. In particular, when specialized to the NoLips algorithm proposed in \cite{Bauschke2016A}, a globally linear convergence result is obtained which supplements the convergence results in \cite{Bauschke2016A} where only the sublinear convergence rate was established.
   In \cite{Bertsekas2015incre}, Bertsekas pointed out that the convergence results for the IAP method applied with a convex constraint are not available. We show that the convergence results can be derived as a special case of our generalized results.
    Moreover for the PIAG method, we find that the quadratic growth condition (weaker than the  strong convexity used in the literature) is enough to ensure the linear convergence and further, an improved locally linear convergence rate can be derived under the stronger H\"{o}lderian growth condition.
%  \cite{Teboull2018A}
%   The general results derived in this paper recover many existing convergence results beyond the standard assumptions in the literature (i.e, without requiring the strong convexity of $F$ and the Lipschitz continuity of $\nabla F$), and moreover, they provide convergence results for some novel iteration schemes.

\item \textbf{Proof strategy.} Our proof strategy for the linear convergence rate relies on two pillars: one is the Lipschitz-like/convexity condition introduced in \cite{Bauschke2016A} and the other is the Bregman distance growth condition proposed in this paper. A certain Lyapunov function is constructed in our proof by embedding the Bregman distance growth condition into a descent-type lemma. This makes our proof technically different from the existing ones. Moreover, this proof technique can be slightly modified to analyze linear convergence of nonconvex and inertial PIAG methods.

\end{itemize}

%Our perspective is new even for the traditional (proximal) gradient methods. When specialized to the PG method with Bregman distance functions, our framework may be viewed as a complement to that in \cite{Bauschke2016A}, and can also be viewed as a further development of the convergence theory developed recently in \cite{Zhang2015The, I2015Linear,Karimil2016linear,Drusvyatskiy2016Error}.

\subsection{Organization}

The rest of the paper is organized as follows.  In Section \ref{se2}, we list all the useful assumptions. In Section \ref{se3}, we propose the unified algorithmic framework. In Section \ref{se4}, we establish the globally sublinear and linear convergence for our proposed algorithm under some modified assumptions. As an extension, in Section \ref{se5}, we derive an improved locally linear convergence rate for the PIAG method under a H\"{o}lderian growth condition.  Concluding remarks are given in Section
\ref{se6}. All proofs are given in Appendices.

\bigskip

{\bf Notation.} The notation used in this paper is standard as in the literature. $\|\cdot\|$ stands for the
Euclidean norm. For any $x\in \RR^d$ and any nonempty set $\Omega\subseteq \RR^d$, the Euclidean distance from $x$ to $\Omega$ is defined by
$d(x,\Omega):=\inf_{y\in \Omega}\|x-y\|.$ We let $\delta_\Omega(\cdot)$ stand for the indicator function which is equal to 0 if $x\in \Omega$ and $\infty$ otherwise, and $\overline{\Omega}$ and $\rint \Omega$ denote the closure and interior of $\Omega$ respectively. An effective domain of an extended function $\varphi:\RR^d\to (0,\infty]$ is defined as $\dom \varphi := \{x\in \RR^d: \varphi(x)<\infty\}$.
Let $\cX$ be the optimal solution set of problem \eqref{main} and $\Phi^*$ the optimal objective function value. We assume that $\cX$ is nonempty and compact. If $\cX$ is a singleton,  let $x^*$ denote the unique optimal solution. For simplicity, we let $\cN:=\{1,2,\ldots,N\}$ and $\cT:=\{0,1,\ldots,\tau\}$ for a upper bound $\tau$ for delayed indexes.

\section{Assumptions}\label{se2}

In this section, we give all the assumptions that will be used in this paper.
%Let the feasible set $\mathcal{Q}\subset \RR^d$ be a nonempty, closed, and convex set with a nonempty interior. A strategy to handle $\mathcal{Q}$ is to choose a Legendre function on $\mathcal{Q}$ and then use its associated Bregman distance as a proximity measure.
First of all, let us recall the definition of a Legendre function; see, e.g.,  \cite{Rock1970convex}. Let $\varphi: \RR^d\rightarrow(-\infty, +\infty]$ be a proper lsc convex function. We say that it is essentially smooth if $\rint\dom{\varphi}\neq\emptyset$, $\varphi$ is differentiable on $\rint\dom{\varphi}$, and  $\|\nabla \varphi(x_k)\|\rightarrow \infty$ for every sequence $\{x_k\}_{k\geq 0}\subseteq \rint\dom{\varphi}$ converging to a boundary point of $\dom{\varphi}$ as $k\rightarrow \infty$. We say that it is of Legendre type if $\varphi$ is essentially smooth and strictly convex on $\rint\dom{\varphi}$.

For the constraint set $\cQ$, by e.g., \cite[Exampple 3.6]{legendre}, there must exist a Legendre function $w:\RR^d\to (-\infty,\infty]$ such that $\cQ=\overline{\dom{w}}$. Associated with this Legendre function $w$, the Bregman distance is defined as
$$D_w(y,x)=w(y)-w(x)-\langle \nabla w(x), y-x\rangle,\quad  \forall y\in \dom{w}, x\in \rint\dom{w}.$$
In contrast to the Euclidean distance, the Bregman distance is generally not symmetric, i.e., $D_w(y,x)\neq D_w(x,y)$ if $x\neq y$. If $\cQ=\RR^d$ and $w(x)=\frac{1}{2}\|x\|^2$, then $D_w(y,x)=\frac{1}{2}\|y-x\|^2$. In this sense, the Bregman distance generalizes the Euclidean distance. The following two simple but remarkable properties will be used:
\begin{itemize}
  \item The three-point identity (see \cite{Chen1993conv}). For any points $x\in \dom{w}, y,z\in \rint\dom{w}$, we have
$$D_w(x,z)- D_w(x,y)- D_w(y,z)=\langle \nabla w(y)-\nabla w(z),x- y \rangle.$$
  \item Linear additivity (see \cite{bolte2017first}). For any constants $\alpha,\beta \in \RR$ and any functions $w_1, w_2$, we have
  $$D_{\alpha w_1+\beta w_2}(x,y)=\alpha D_{w_1}(x,y)+\beta D_{w_2}(x,y),$$
  for all $x, y\in \dom{w_1}\bigcap \dom{w_2}$ such that both $w_1$ and $w_2$ are differentiable at $y$.
\end{itemize}

\subsection{Standard assumptions}
The following are the standard assumptions in the literature which are also used in \cite{Analysis2016Aytekin} and \cite{A2016Vanli}.
\begin{itemize}
  \item[A0.] The time-varying delays $\tau_k^n$ are bounded, i.e., there is a nonnegative integer $\tau$ such that
  $$\tau_k^n\in \cT, \quad \forall k\geq 0, n\in \cN. $$
Such $\tau$ is called the smallest upper bound for all delayed indexes.
  \item[A1.] Each component function $f_n$ is convex with $L_n$-Lipschitz continuous gradient, i.e.,
  $$\|\nabla f_n(x)-\nabla f_n(y)\|\leq L_n\|x-y\|, ~~\forall x, y\in \RR^d.$$
  This assumption implies that the sum function $F$ is convex with $L$-Lipschitz continuous gradient, where $L=\sum_{n=1}^N L_n$.
  \item[A2.] The function $h$ is subdifferentiable in its effective domain, i.e., $\partial h(x)\neq \emptyset$ for $x\in \dom{h}$.
  \item[A3.] The sum function $F$ is strongly convex on $\RR^d$, i.e., there exists $\nu>0$ such that the function $F(x)-\frac{\nu}{2}\|x\|^2$ is convex on $\RR^d$.
\end{itemize}

%If each component function is convex, then the assumption A3 can be equivalently described by the following assumption.
%\begin{itemize}
%  \item[A3a.] Each component function is $\mu_n$-strongly convex for some $\mu_n\geq 0$ such that $\sum_{n=1}^N\mu_n=\mu$.
%\end{itemize}
%The equivalence follows from the following relationship:
%$$\mu=\sup_{x\neq y}\sum_{i=1}^N\frac{\langle \nabla f_n(x)-\nabla f_n(y), x-y\rangle }{\|x-y\|^2}=\sum_{i=1}^N \sup_{x\neq y}\frac{\langle \nabla f_n(x)-\nabla f_n(y), x-y\rangle }{\|x-y\|^2}=\sum_{n=1}^N\mu_n,$$
%where we use the convexity to ensure each inner product to be nonnegative so that we can change the order of summation and taking supremum.

\subsection{Growth conditions}\label{subse:growth}

Recently, many research indicates that the strong convexity assumption is not a necessary condition to obtain linear convergence of first order methods; see, e.g, \cite{Bolte2015From}. It can be replaced by the following quadratic growth condition:
\begin{itemize}
  \item[A3a.] The function $\Phi$ satisfies the quadratic growth condition, i.e., there is $\nu>0$ such that
      \begin{equation}\label{growth}
      \Phi(x)-\Phi^*\geq \frac{\nu}{2}d^2(x,\cX),\quad \forall x\in \cQ.
      \end{equation}
\end{itemize}
This condition may be seen as a generalization of the strong convexity of $\Phi$. Indeed, by the subgradient inequality for the strong convex function $\Phi+\delta_{\cQ}$, one can show that \eqref{growth} holds, and thus assumption A3 is stronger than assumption A3a. However, we can easily construct functions to illustrate that the reverse implication fails. For example, the composition function $g(Ax)$, where $g$ is a strongly convex function and $A$ is a rank deficient matrix, satisfies the quadratic growth condition but it is not strongly convex. In this sense, assumption A3a is strictly weaker than assumption A3. See Appendix A for more examples.

The quadratic growth condition \eqref{growth} is a special case of the following H\"{o}lderian growth condition by setting $\theta=1$. It is easy to see that when $\theta$ is smaller, the H\"{o}lderian growth condition is weaker when $d(x,\cX)\ge1$, while it is stronger when $d(x,\cX)\le1$. Thus, in the latter case, it is reasonable to expect a faster convergence speed when $\theta$ becomes smaller. This has been shown to hold true for the PG method; see \cite{Garrigos2017conv,Bolte2015From}.

\begin{itemize}
  \item[A3b.] The function $\Phi$ satisfies the H\"{o}lderian growth condition with $0< \theta\leq 1$, i.e.,  there is $\mu>0$ such that
      \begin{equation}\label{pgrow}
  \Phi (x)-\Phi^*\geq \frac{\mu}{2}d^{2\theta}(x,\cX),\quad \forall x\in\cQ.
\end{equation}

\end{itemize}
%As pointed out in \cite{Garrigos2017conv} that the bigger the parameter $\theta$ is, the more ``flat"  the function is around its minimizers which in turns means that a gradient descent algorithm shall converge slowly.
In this paper, we will show that for the PIAG method, faster convergence rate can also be expected when $\theta$ is smaller if  the distance from the initial point to $\cX$ is not greater than 1.
%Due to the close connection of the quadratic growth condition with previously mentioned weaker notions \cite{Zhang2015The,zhang2016new,I2015Linear,Karimil2016linear,Drusvyatskiy2016Error,Bolte2015From,Zhou2015A},  the methods in \cite{Zhou2015A,Li2016Calculus} can be used to verify whether a given function satisfies the quadratic growth condition.

\subsection{Modified assumptions}\label{subsection}

In this subsection, we introduce a group of assumptions that generalize the standard assumptions in the literature. Recall that a suitable function $w$ is predetermined such that $\cQ=\overline{\dom{w}}$. It should be noted that the foremost factor in choosing $w$ is that the subproblems will be computationally tractable, and the strong convexity and Lipschitz gradient continuity of $F$ can be weakened.

% Recall that there exists a Legendre function $w$ such that $\cQ=\overline{\dom{w}}$.

\begin{itemize}
  \item[B1.] Each component function $f_n$ is proper lsc convex with  $\dom{w} \subseteq \dom{f_n}$, which is differentiable on $\rint\dom{w}$. In addition, for any $n\in\cN$,
      the pair of functions $(w,f_n)$ satisfies a Lipschitz-like/convexity condition, i.e., there exists $L_n>0$ such that $L_nw-f_n$ is convex on $\rint\dom w$.

\item[B2.] $\dom{h}\cap \rint\dom w\neq \emptyset$. Moreover, $h$ is subdifferentiable in $\dom{h}\cap \rint\dom w$.

\item[B3.]
The function $\Phi$  satisfies the Bregman distance growth condition, i.e.,  there is $\mu>0$ such that
\begin{equation}\label{BDG}
\Phi(y)-\Phi^*\geq \mu\inf_{z\in \cX}D_w(z,y),~~\forall  y\in \rint\dom{w},
\end{equation}
where we recall that $\cX$ is the optimal solution set.

\item[B4.]  If the smallest upper bound $\tau$ for all delayed indexes is positive, then there exists a monotonically increasing function $\ell(\cdot)$ with $\ell(1)=1$ such that for any sequence $\{v_1,v_2,\ldots, v_{k}\}$ belonging to $\rint\dom{w}$, it holds that
  $$D_w(v_{k},v_j)\leq \ell(k-j)\sum_{i=j}^{k-1} D_w(v_{i+1},v_i),\quad \forall j\ge1, k >j+1.$$
\end{itemize}

%The idea behind Lipschitz-like/convexity condition, originally appeared in \cite{Bauschke2016A}, was also followed by the so-called relative smoothness in the recent paper \cite{Lu2016relatively}.

The Lipschitz-like/convexity condition in assumption B1 was originally introduced in \cite{Bauschke2016A} to weaken the Lipschitz continuity of the gradient of $f_n$. It is equivalent to
$$
f_n(y)\leq f_n(x)+\langle \nabla f_n(x),y-x\rangle +L_n D_w(y,x),~~\forall x, y\in \rint\dom{w}.
$$
Later on, this condition was used in \cite{Lu2016relatively} under the term ``the relative smoothness of $f_n$ with respect to $w$".
Recently, Bolte et al. further developed this idea to analyze the convergence of nonconvex composition minimization problems in \cite{bolte2017first}.

%see also the very recent paper \cite{Teboull2018A}.

%This condition was also used to analyze the convergence of nonconvex composition minimization problems in \cite{bolte2017first}

The Bregman distance growth condition in assumption B3 is a new notion. It can be viewed as a generalization of the quadratic growth condition when using the Bregman distance instead of the Euclidean distance.  It should be noted that the Bregman distance growth condition was also independently used in \cite{Gutman2018A} to derive linear convergence of an accelerated Bregman proximal gradient method and used in \cite{Bauschke2019On} to prove linear convergence of a Bregman gradient method for nonconvex minimization problems, while this work was under review. The condition \eqref{BDG} implicitly assume that $\cX \bigcap \dom{w} \neq \emptyset$. In Appendix A, we  give two classes of problems satisfying the quadratic growth condition but the standard assumption A3 fails, and in Appendix B, we propose two sufficient conditions in terms of properties of the Legendre function $w$ for assumption B3 to hold, and give a class of functions satisfying the Bregman distance growth condition.  Finding more practical functions satisfying the Bregman distance growth condition is a very
interesting topic and deserves further study.

%\textcolor[rgb]{1.00,0.00,0.00}{However, the Legendre function $w$ is constructed by capturing the geometry of the constrained set $\mathcal{Q}$ (and at the same time weakening the Lipschitz gradient continuity of the objective function).
%Therefore, it is more important and essential (also more difficult) to identify certain geometrical properties to ensure the Bregman distance growth condition.
%We note that for some very special constrained set $\mathcal{Q}$, D-functional growth property, which is closely related to the Bregman distance growth condition, was recently studied by introducing the so-called D-functional growth constant in \cite{Gutman2019The}.
%In  the future, it should be very interesting to establish the Bregman distance growth condition via exploiting geometrical properties of the constrained set $\mathcal{Q}$.
%}
%Thus, it is strictly weaker than the standard assumption A3.

Assumption B4 holds automatically with $\ell(\cdot)$ being the identity function if $w(x)=\frac{1}{2}\|x\|^2$. It can be seen as an analog of the following inequality:
$$
\|v_{k}-v_j\|^2=\|\sum_{i=j}^{k-1}(v_{i+1}-v_i)\|^2\leq (k-j)\sum_{i=j}^{k-1}\|v_{i+1}-v_i\|^2,\quad \forall  k>j\ge1.
$$
This assumption is not stronger than the  condition used in \cite{Analysis2016Aytekin}: there exist $\mu_w>0$ and $L_w>0$ such that
\begin{equation*}\label{BC}
\frac{\mu_w}{2}\|x-y\|^2\leq D_w(x,y)\leq \frac{L_w}{2}\|x-y\|^2,
\end{equation*}
holding automatically if $w$ is strongly convex and has a Lipschitz gradient, since for all $k>j+1$,
$$
D_w(v_{k},v_j)\leq \frac{L_w}{2}\|v_{k}-v_j\|^2\leq \frac{(k-j) L_w}{2}\sum_{i=j}^{k-1}\|v_{i+1}-v_i\|^2\leq\frac{(k-j)L_w}{\mu_w}\sum_{i=j}^{k-1} D_w(v_{i+1},v_i).
$$
%This inequality implies that the assumption B4 holds with $\ell(k-j):=\frac{ L_w}{\mu_w} (k-j)$. Thus, the assumption B4 is not stronger than  the condition \eqref{BC} that was used in \cite{Analysis2016Aytekin}.
%This inequality implies that the assumption B4 holds with $\ell(k-j):=\frac{ L_w}{\mu_w} (k-j)$. Thus, the assumption B4 is not stronger than  the condition \eqref{BC} that was used in \cite{Analysis2016Aytekin}.

\section{Unified algorithmic framework}\label{se3}

We describe our unified algorithmic framework for solving problem \eqref{main} as follows.
\bigskip
\\
\fbox{
\begin{minipage}[l]{0.91\textwidth}
\begin{flushleft}
{\bf Proximal-Like Incremental Aggregated Gradient (PLIAG) Method:}
%\begin{itemize}
\noindent\item[(i)] Choose a Legendre function $w$ such that $\cQ=\overline{\dom w}$. Choose $\tau\ge 0$ as the smallest upper bound for all delayed indexes and an arbitrary initial point $x_0\in \rint \dom w$. Assume that $x_k=x_0$ for all $k<0$. Let $k=0$. \\

\item[(ii)] Choose $\alpha_k>0$,  $\emptyset\neq\cJ_k\subseteq \cJ_k$, and $\tau_k^n\in\cT$ for all $n\in \cN$. Let $\cI_k$ be the complement of $\cJ_k$ with respect to $\cN$. The next iteration point $x_{k+1}$ is obtained via
      \begin{equation}\label{frame1}
      x_{k+1}=\arg\min_{x\in \mathcal{Q}}\Big\{\Phi_k(x):= h(x)+ \sum_{i\in \cI_k} f_{i}(x)+ \Big\langle \sum_{j\in \cJ_k}\nabla f_j(x_{k-\tau_k^j}),x\Big\rangle +\frac{1}{\alpha_k}D_w(x,x_k) \Big\}.
      \end{equation}
\item[(iii)] Set $k \leftarrow k+1$ and go to Step ii).
%\end{itemize}
\end{flushleft}
\end{minipage}
}

\bigskip
%Now, we are ready to propose the following  proximal-like incremental aggregated gradient (PLIAG) method for solving \eqref{main}:
%\begin{equation}\label{frame1}
%x_{k+1}=\arg\min_{x\in \mathcal{Q}}\{\Phi_k(x):= h(x)+ \sum_{i\in \cI_k} f_{i}(x)+ \langle \sum_{j\in \cJ_k}\nabla f_j(x_{k-\tau_k^j}),x\rangle +\frac{1}{\alpha_k}D_w(x,x_k) \},
%\end{equation}
%where $\cI_k, \cJ_k$ are index sets satisfying that
%$$\cI_k\bigcup \cJ_k=\{1, 2, \cdots, N\},~~\cI_k\bigcap \cJ_k=\emptyset, ~~\cJ_k\neq \emptyset.$$

%Moreover, it is not necessary to be the same for different $k$.

We also remark that the foremost factor in choosing $w$ is that the subproblem \eqref{frame1} will be computationally tractable, and the strong convexity and Lipschitz gradient continuity of $F$ can be weakened. The iterate \eqref{frame1} can be equivalently written as
$
x_{k+1}=\arg\min_{x\in \RR^d} \{\Phi_k(x)\}
$ since $\dom{w} \subseteq \mathcal{Q}$.
The PLIAG method is well-defined  as shown in the following result.
\begin{proposition}\label{pro1}
Assume that assumptions B1 and B4 hold. Let $\alpha_k\leq \frac{1}{\sum_{j\in J_k}L_j}$ when $\tau=0$ and $\alpha_k\leq \frac{1}{\ell(2)\sum_{j\in J_k}L_j}$ when $\tau>0$. If $x_j\in \rint\dom{w}$ for all $j\leq k$, then the problem in \eqref{frame1} has a unique optimal solution which belongs to $\rint\dom w$.
\end{proposition}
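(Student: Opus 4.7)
The plan is to prove three things in sequence: uniqueness of the minimizer, existence of a minimizer, and that it lies in $\rint\dom{w}$. The two main tools are the Legendre structure of $w$---giving strict convexity on $\rint\dom{w}$ and essential smoothness---together with the Lipschitz-like/convexity condition B1 and the step-size bound, which together control the linear perturbation arising from the aggregated delayed gradients.

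\emph{Uniqueness} follows immediately. Since $w$ is Legendre, it is strictly convex on $\rint\dom{w}$, so $D_w(\cdot, x_k)$ is strictly convex there; combined with the convexity of $h$, of each $f_i$ for $i \in \cI_k$ (by B1), and linearity of the inner-product term, $\Phi_k$ is strictly convex on $\rint\dom{w}$, so any minimizer in $\rint\dom{w}$ is unique.

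\emph{Existence and interiority.} Setting $g_k := \sum_{j \in \cJ_k} \nabla f_j(x_{k-\tau_k^j})$, I would rewrite
\[
\Phi_k(x) = \Big[h(x) + \sum_{i \in \cI_k} f_i(x) + \tfrac{1}{\alpha_k} w(x)\Big] + \Big\langle g_k - \tfrac{1}{\alpha_k}\nabla w(x_k),\, x\Big\rangle + \text{const.},
\]
where the bracketed function is proper lsc convex and strictly convex on $\rint\dom{w}$. To show that $\Phi_k$ attains its infimum in $\cQ = \overline{\dom w}$, I would combine non-negativity of $D_w(\cdot, x_k)$ with subgradient affine lower bounds for $h$ and each $f_i$, and exploit the coercivity furnished by the Legendre piece $\tfrac{1}{\alpha_k}w(\cdot)$; the step-size bound $\alpha_k \leq 1/(\sum_{j\in\cJ_k} L_j)$ (with the $\ell(2)$-adjustment when $\tau > 0$) is precisely what ensures that $\frac{1}{\alpha_k} D_w(\cdot, x_k)$ dominates the linear term $\langle g_k,\cdot\rangle$, so that the sublevel sets of $\Phi_k$ inside $\cQ$ are bounded and lower semi-continuity yields a minimizer $x^*$. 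To place $x^*$ in $\rint\dom{w}$, suppose for contradiction $x^* \in \partial\dom w$ and pick $z \in \rint\dom w \cap \dom\Phi_k$ (for instance $z = x_k$); along $x_t = (1-t)x^* + tz \in \rint\dom w$ for $t \in (0,1]$, the right-derivative of $\Phi_k(x_t)$ at $t = 0^+$ inherits a $-\infty$ from the Bregman term because essential smoothness of $w$ forces $\|\nabla w(x_t)\| \to \infty$ as $t \to 0^+$, contradicting minimality of $x^*$.

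The hardest step is existence: unlike the Euclidean case, coercivity of a Bregman-type objective on an unbounded $\dom w$ is not automatic, and the step-size bound (together with B1, plus B4 via $\ell(2)$ when $\tau>0$) is exactly the mechanism needed to dominate the linear perturbation $\langle g_k, \cdot\rangle$ coming from the aggregated delayed gradients. The uniqueness and interiority steps, by contrast, are routine applications of the Legendre definition.
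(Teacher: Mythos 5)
Your uniqueness and interiority arguments are fine and match the paper's in substance (the paper phrases interiority as ``$\partial w(z)=\emptyset$ for $z\notin\rint\dom w$'', which is the subdifferential form of your directional-derivative argument). The genuine gap is in the existence step. The coercivity you invoke is not available: a Legendre function need not be coercive, and $D_w(\cdot,x_k)$ need not grow fast enough to dominate an arbitrary linear term. Burg's entropy $w(x)=-\sum_j\log x_j$ --- used in the paper's own elastic-net example --- gives $D_w(x,x_k)$ with only \emph{linear} growth as $\|x\|\to\infty$, so ``$\tfrac{1}{\alpha_k}D_w(\cdot,x_k)$ dominates $\langle g_k,\cdot\rangle$'' is false in general no matter how small $\alpha_k$ is; the step-size bound controls the \emph{coefficient} of a Bregman term produced by B1, not the growth rate of $D_w$. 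As written, your argument does not establish bounded sublevel sets.

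The missing idea is a pointwise comparison of $\Phi_k$ with $\Phi$. Assumption B1 gives $f_j(x)\le f_j(x_{k-\tau_k^j})+\langle\nabla f_j(x_{k-\tau_k^j}),x-x_{k-\tau_k^j}\rangle+L_jD_w(x,x_{k-\tau_k^j})$, i.e.\ it lets you trade the linear term $\langle\nabla f_j(x_{k-\tau_k^j}),x\rangle$ for $f_j(x)$ minus $L_jD_w(x,x_{k-\tau_k^j})$ and a constant; B4 (with $\ell(2)$) converts $D_w(x,x_{k-\tau_k^j})$ into $\ell(2)D_w(x,x_k)$ plus a constant; and the step-size bound $\alpha_k\le 1/(\ell(2)\sum_{j\in\cJ_k}L_j)$ lets $\tfrac{1}{\alpha_k}D_w(x,x_k)$ absorb the resulting Bregman term. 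This yields $\Phi(x)\le\Phi_k(x)+C_k$ for all $x\in\rint\dom w$, with $C_k$ independent of $x$. Level-boundedness of $\Phi_k+\delta_\cQ$ is then inherited from that of $\Phi+\delta_\cQ$, which follows from the standing assumption that $\cX$ is nonempty and compact (Rockafellar, Corollary 8.7.1) --- an assumption your proposal never uses, and without which the subproblem can genuinely fail to attain its infimum. Existence then follows from Weierstrass, and your uniqueness and interiority steps complete the proof.
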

The PLIAG method is a general algorithmic framework that includes all the methods mentioned in Subsection \ref{subse:algo}  as special cases. In particular, when $\cQ = \RR^d$, $w(x)=\frac{1}{2}\|x\|^2$, and $D_w(x,x_k)=\frac{1}{2}\|x-x_k\|^2$, one can have
\begin{itemize}
  \item[a).]  if $\cI_k\equiv\emptyset$ and $h(x)\equiv0$, then the iterate \eqref{frame1} recovers the IAG method;
  \item[b).]  if $\cI_k\equiv\{i_k\}$ and $h(x)\equiv0$, then the iterate \eqref{frame1} recovers the IAP method;
  \item[c).]  if $\cI_k\equiv\emptyset $, then the iterate \eqref{frame1} recovers the PIAG method.
  %\item[d).]  if $N=1$ and $\tau_k^n\equiv0$, then the iterate \eqref{frame1} recovers the corresponding algorithms in \cite{Bauschke2016A} and \cite{Lu2016relatively}.
\end{itemize}
Moreover, the PLIAG method includes two novel interesting methods as follows.

{\bf Non-Euclidean PIAG method.}
Letting $\cI_k\equiv\emptyset $, the iterate \eqref{frame1} becomes
\begin{equation}\label{PIAG-B}
x_{k+1}=\arg\min_{x\in \RR^d}\Big\{h(x)+\Big\langle \sum_{n=1}^N\nabla f_n(x_{k-\tau_k^n}), x \Big\rangle + \frac{1}{\alpha_k}D_w(x,x_k)\Big\}.
\end{equation}
The iterate \eqref{PIAG-B} can be viewed as an incremental version of the NoLips algorithm recently studied in \cite{Bauschke2016A} (i.e., the iterate \eqref{PIAG-B} with $N=1$ and $\tau_k^n\equiv 0$). The computation complexity in each iteration of the two algorithms is the same once the full gradient $\sum_{n=1}^N\nabla f_n(x_{k})$ and the approximate gradient $\sum_{n=1}^N\nabla f_n(x_{k-\tau_k^n})$ are given. However, when the number $N$ is large, the computation of the full gradient is costly and even prohibitive, while the computation cost of the approximate gradient may be very low since it uses historical gradients of component functions. In \cite{Bauschke2016A}, the NoLips algorithm is applied  to solve the Poisson linear inverse problem, whose smooth part consists of many component functions. As the number of component functions is large, the iteration scheme \eqref{PIAG-B} may be a more reasonable choice to solve this problem.
While the iterate \eqref{PIAG-B} is structurally the same as the one in \cite{Analysis2016Aytekin}, as will be seen in Section \ref{se4}, the strong convexity of functions $w$ and $F$, and the Lipschitz gradient continuity of $F$ are not needed in our convergence analysis.

{\bf Non-Euclidean IAP method.}
Letting $\cI_k=\{i_k\}$ and $h(x)\equiv0$, the iterate \eqref{frame1} becomes
\begin{equation}\label{IAP-B}
x_{k+1}=\arg\min_{x\in \RR^d}\Big\{  f_{i_k}(x)+ \Big\langle \sum_{j\neq i_k}\nabla f_j(x_{k-\tau_k^j}),x \Big\rangle +\frac{1}{\alpha_k}D_w(x,x_k) \Big\}.
\end{equation}
This iterate \eqref{IAP-B} can be called a non-Euclidean IAP method, which includes the non-quadratic IAP method in \cite{Bertsekas2015incre} as a special case. The convergence results for the  IAP and non-quadratic IAP methods applied with a convex constraint set are not available in \cite{Bertsekas2015incre} and are leaved open. By choosing a Legendre function $w$ such that $\cQ=\overline{\dom{w}}$, the non-Euclidean IAP method applies to solve convex minimization problems with a convex constraint set beyond the standard assumptions; see Theorems \ref{mainresult0} and \ref{mainresult} in Section \ref{se4}. We point out that the constrained IAP scheme \eqref{IAP-C} introduced in Section \ref{se4} can also apply to solve convex minimization problems with a convex constraint set but its convergence is established under the standard assumptions A1 and A3a (see Corollary \ref{cor4.2}).

%Bertsekas proposed a non-quadratic IAP method (a special case of the non-Euclidean IAP method) in \cite{Bertsekas2015incre} but did not touch its convergence issues. From Theorems \ref{mainresult0} and \ref{mainresult},

%The Non-Euclidean IAP method can be viewed as an extension. In particular, for convex constraint set $\cQ$, choose a Legendre function $w$ such that $\cQ=\overline{\dom{w}}$ and the Non-Euclidean IAP method applies to solve convex minimization problems with a convex constraint set; see Theorems \ref{mainresult0} and \ref{mainresult} in Section \ref{se4}.

%Some widely-used convex constraint sets and associated Legendre functions can be found in \cite[Example 1]{Bauschke2016A}.

% In other words, the IAP method does not apply to an constrained convex minimization problem.
%When $\cI_k$ is the same for every $k\ge0$, the term $\sum_{i\in \cI_k}f_i(x)$ may be included in the nonsmooth part $h(x)$.

% The spirit is the same as done in the IAP method.

As illustrated in the non-Euclidean PIAG method, when setting $\cI_k\equiv\emptyset$, the computation complexity in each iteration of the PLIAG method is the same as that of the NoLips algorithm in \cite{Bauschke2016A}. Including $\cI_k$ in our proposed framework generally makes the computation of the subproblems difficult. The reason why we include $\cI_k$ in the proposed framework is that: (a) the IAP method can be recovered and the derived convergence results can apply to the IAP method, and (b) for some cases, keeping some component functions in each iteration will provide a better approximation for the original problem while might preserve the low computation complexity at each iteration. To illustrate this point, let us consider the augmented sparse minimization problem (also called elastic net model \cite{zou2005Re}):
\begin{equation}\label{elas-net}
\min \Big\{\sum_{i=1}^m \Big\{\langle a_i,x\rangle -b_i\log\langle a_i,x\rangle \Big\}+\beta\|x\|^2+\mu\|x\|_1: x\ge0 \Big\},
\end{equation}
where $0\not=a_i\in \RR^n_+$, $b_i>0$, $\mu, \beta\geq0$ are given. This model with $\beta=0$ was used in \cite{Bauschke2016A} to illustrate how the NoLips algorithm can be applied. Let $g(x):= \sum_{i=1}^m \{\langle a_i,x\rangle -b_i\log\langle a_i,x\rangle \}$ and take Burg's entropy function $w(x)=-\sum_{j=1}^n \log x_j$ with $\dom w=\RR^n_{++}$ as a Legendre function. It has been shown that the pair of functions $(w,g)$ enjoys the Lipschitz-like/convexity condition (see \cite[Lemma 7]{Bauschke2016A}), i.e., $\tilde{L}w-g$ is convex on $\RR^n_{++}$  for any $\tilde{L}\geq L:=\sum_{i=1}^m b_i$. Taking the step size $\alpha=1/L$ and applying the NoLips algorithm to \eqref{elas-net} with $\beta=0$, we arrive at the subproblem
$$x^+=\arg\min\Big\{ \mu\|u\|_1 +\langle \nabla g(x), u\rangle + \frac{1}{\alpha}\sum_{j=1}^n\left(\frac{u_j}{x_j}-\log\frac{u_j}{x_j}-1\right): u>0\Big\}.$$
For the general case when $\beta\neq 0$, letting $\tilde{g}(x):=g(x)+\beta \|x\|^2$ as the smooth part
and $\tilde{w}(x):=w(x)+\frac{\beta}{L}\|x\|^2$ as the associated Legendre function, it follows that $L\tilde{w}-\tilde{g}=Lw-g$ is convex on $\RR^n_{++}$ and hence the pair of functions $(\tilde{w},\tilde{g})$ also enjoys the Lipschitz-like/convexity condition. Then applying the NoLips algorithm  to \eqref{elas-net} yields the iterate as follows:
$$
x^+=\arg\min\Big\{ \mu\|u\|_1 +\langle \nabla g(x), u\rangle + 2\beta\langle x, u\rangle + \frac{1}{\alpha}\sum_{j=1}^n\left(\frac{u_j}{x_j}-\log\frac{u_j}{x_j}-1\right)+\beta \|u-x\|^2: u>0\Big\},
$$
which can be solved analytically. On the other hand, when keeping the term $\mu \|x\|^2$ in each iteration, the corresponding iterate becomes
\begin{equation*}\label{sub}
x^+=\arg\min\Big\{ \mu\|u\|_1 +\langle \nabla g(x), u\rangle + \beta\|u\|^2 + \frac{1}{\lambda}\sum_{j=1}^n\left(\frac{u_j}{x_j}-\log\frac{u_j}{x_j}-1\right)+ \beta \|u-x\|^2: u>0\Big\},
\end{equation*}
which reduces to solve $n$ number of one-dimensional convex problems in the form
$$
 x_i^+=\arg\min\Big\{\mu u_i +\gamma u_i +\beta u_i^2+\frac{1}{\lambda}\left(\frac{u_i}{x_i}-\log \frac{u_i}{x_i}\right)+\beta (u_i-x_i)^2: u_i>0\Big\}\quad  i=1,\ldots,n,
$$
where $\gamma$ stands for the $i$-th component of $\nabla g(x)$. Direct calculation yields that
$$
x_i^+=\frac{\sqrt{(\lambda\mu x_i + \lambda\gamma x_i + 1- 2\lambda\beta x_i^2)^2+ 16\lambda \beta x_i^2}-(\lambda\mu x_i + \lambda\gamma x_i + 1- 2\lambda\beta x_i^2)}{8\lambda\beta x_i}\quad i=1,\ldots,n.
$$

\section{Key Lemmas and Main Results}\label{se4}

In this section, we investigate the convergence behavior of the PLIAG method and give the required key lemmas. To simplify the convergence analysis, we assume that the sequence $\{x_k\}$ is generated by the PLIAG method with the same step size (i.e., $\alpha_k\equiv\alpha$). All the results presented below have evident valid counterparts for the PLIAG method with different step size $\alpha_k$. Throughout this section, we assume that assumption A0 always holds.

The first key lemma can be viewed as a generalization of the standard descent lemma for the PG method  (\cite[Lemma 2.3]{beck2009fast}) and the newly discovered descent lemma for the NoLips method  (\cite[Lemma 5]{Bauschke2016A}).

\begin{lemma}[Descent lemma with delayed terms]\label{lem2}
Suppose that assumptions B1, B2, and B4 hold. Let $x_j\in \rint \dom w$ for all $j\le k$ and $x_{k+1}$ be obtained via \eqref{frame1}. Let $L:=\max\limits_{k\geq 0}\sum\limits_{j\in J_k}L_j$ and
$$\Delta_k := L\cdot \ell(\tau+1) \sum_{j=k-\tau}^k  D_w(x_{j+1},x_j),$$
where $\tau$ is the smallest upper bound for all delayed
indexes. Then we have
\begin{equation*}
\Phi(x_{k+1})\leq \Phi(x)+\frac{1}{\alpha}D_w(x,x_k)-\frac{1}{\alpha}D_w(x,x_{k+1})-\frac{1}{\alpha}D_w(x_{k+1},x_k) + \Delta_k,\quad \forall x\in \dom w.
\end{equation*}
\end{lemma}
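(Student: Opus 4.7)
The plan is to start from the first-order optimality condition of the strongly convex subproblem \eqref{frame1}. Since Proposition \ref{pro1} guarantees $x_{k+1} \in \rint\dom w$, the normal-cone contribution of $\cQ$ vanishes, each $f_i$ ($i \in \cI_k$) is differentiable at $x_{k+1}$ by B1, and assumption B2 ensures $\partial h(x_{k+1}) \neq \emptyset$, so there exists $\xi \in \partial h(x_{k+1})$ with
$$\xi + \sum_{i \in \cI_k} \nabla f_i(x_{k+1}) + \sum_{j \in \cJ_k} \nabla f_j(x_{k - \tau_k^j}) + \frac{1}{\alpha}\bigl(\nabla w(x_{k+1}) - \nabla w(x_k)\bigr) = 0.$$

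Next, for an arbitrary $x \in \dom w$ (we may assume $x \in \dom h$, otherwise the target inequality is trivial), I would lower bound $\Phi(x) - \Phi(x_{k+1})$ by combining: (i) the subgradient inequality $h(x) \ge h(x_{k+1}) + \langle \xi, x - x_{k+1}\rangle$; (ii) convexity of each $f_i$, $i \in \cI_k$, anchored at $x_{k+1}$; and, crucially, (iii) for each $j \in \cJ_k$, the combination of the convexity inequality for $f_j$ anchored at the delayed point $x_{k-\tau_k^j}$ with the Lipschitz-like/convexity inequality from B1 between $x_{k+1}$ and $x_{k-\tau_k^j}$, which yields
$$f_j(x) - f_j(x_{k+1}) \ \ge\ \langle \nabla f_j(x_{k-\tau_k^j}), x - x_{k+1}\rangle - L_j D_w(x_{k+1}, x_{k-\tau_k^j}).$$
Summing, the linear-in-$(x-x_{k+1})$ terms collapse by the optimality identity, leaving
$$\Phi(x) - \Phi(x_{k+1})\ \ge\ \frac{1}{\alpha}\langle \nabla w(x_k) - \nabla w(x_{k+1}), x - x_{k+1}\rangle\ -\ \sum_{j \in \cJ_k} L_j D_w(x_{k+1}, x_{k-\tau_k^j}).$$
Applying the three-point identity rewrites the inner product as $\tfrac{1}{\alpha}[D_w(x, x_{k+1}) + D_w(x_{k+1}, x_k) - D_w(x, x_k)]$, and rearranging recovers the claimed inequality modulo replacing the final error sum by $\Delta_k$.

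The main obstacle, and the part that uses assumption B4 essentially, is the last step: bounding $\sum_{j \in \cJ_k} L_j D_w(x_{k+1}, x_{k-\tau_k^j})$ by $\Delta_k = L \cdot \ell(\tau+1)\sum_{j=k-\tau}^{k} D_w(x_{j+1}, x_j)$. For each $j \in \cJ_k$, assumption B4 gives $D_w(x_{k+1}, x_{k-\tau_k^j}) \le \ell(\tau_k^j + 1)\sum_{i=k-\tau_k^j}^{k} D_w(x_{i+1}, x_i)$ when $\tau_k^j \ge 2$, while the cases $\tau_k^j \in \{0,1\}$ follow trivially from $\ell(1) = 1$ and monotonicity of $\ell$. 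Using $\tau_k^j \le \tau$, monotonicity of $\ell$, and nonnegativity of $D_w$, this upgrades to $\ell(\tau+1)\sum_{i=k-\tau}^{k} D_w(x_{i+1}, x_i)$. Summing over $j \in \cJ_k$ and applying $\sum_{j \in \cJ_k} L_j \le L$ (by the definition of $L$) produces exactly $\Delta_k$. The initialization convention $x_k = x_0$ for $k < 0$ makes boundary terms $D_w(x_{j+1}, x_j)$ with $j < 0$ vanish, so no separate treatment of early iterations is required.
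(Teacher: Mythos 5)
Your proposal is correct and follows essentially the same route as the paper's proof: the Lipschitz-like/convexity bound combined with convexity of each delayed component $f_j$ anchored at $x_{k-\tau_k^j}$, the subgradient inequality for $h+\sum_{i\in\cI_k}f_i$ together with Fermat's rule at $x_{k+1}$, the three-point identity, and finally assumption B4 with $\sum_{j\in\cJ_k}L_j\le L$ to produce $\Delta_k$. The only cosmetic difference is that you phrase the argument as a lower bound on $\Phi(x)-\Phi(x_{k+1})$ rather than an upper bound on $\Phi(x_{k+1})$, which is the same computation.
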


Based on Lemma \ref{lem2}, we derive the following sublinear convergence of the PLIAG method.

\begin{theorem}[Sublinear convergence without a growth condition]\label{mainresult0}
Let $L:=\max\limits_{k\geq 0}\sum\limits_{j\in J_k}L_j$,  where all $L_j$ are constants in assumption B1. Suppose that assumptions B1, B2, and B4 hold, and the step size $\alpha_k\equiv\alpha$ satisfies
\begin{equation}\label{step1}
0<\alpha\leq  \frac{2}{L\cdot \ell(\tau+1)\cdot(\tau+1)\cdot(\tau+2)},
\end{equation}
where $\ell(\cdot)$ is the function in assumption B4.
Define a function sequence of Lyapunov's type as
\begin{equation}
T_k(x):=\Phi(x_k)-\Phi(x)+L\cdot \ell(\tau+1)\cdot\sum_{i=1}^\tau i\cdot D_w(x_{k-\tau+i},x_{k-\tau+i-1}),\quad \forall k\geq 0,\ x\in \dom w.
\end{equation}
Then the PLIAG method converges sublinearly in the sense that
$$T_k(x)\leq \frac{1}{\alpha\cdot k}D_w(x,x_0), ~~\forall k\geq 1,\ x\in \dom w.$$
In particular, the PLIAG method attains a sublinear convergence in function values:
\begin{equation}\label{result020}
\Phi(x_k)-\Phi^* \leq \frac{1}{\alpha\cdot k}D_w(x^*,x_0),\quad \forall k\geq1,\ x^*\in\cX.
\end{equation}
\end{theorem}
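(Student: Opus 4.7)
The plan is to adapt the classical $O(1/k)$ proof for (Bregman) proximal gradient methods, with the weighted delayed-distance correction in $T_k$ absorbing the extra term $\Delta_k$ from Lemma \ref{lem2}. Writing $\beta := L\cdot \ell(\tau+1)$ and $a_j := D_w(x_{j+1}, x_j)$, so that $\Delta_k = \beta \sum_{j=k-\tau}^k a_j$, I will proceed in three steps: (i) show the monotonicity $T_{k+1}(x) \le T_k(x)$ for every $x\in\dom w$; (ii) sum a refined descent inequality to obtain $\sum_{k=0}^{K-1} T_{k+1}(x) \le \frac{1}{\alpha} D_w(x, x_0)$; (iii) convert the summed bound into the per-iterate rate using (i).

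For (i), I apply Lemma \ref{lem2} with $x=x_k$ to obtain $\Phi(x_{k+1}) - \Phi(x_k) \le -\frac{1}{\alpha}[D_w(x_k, x_{k+1}) + a_k] + \beta \sum_{j=k-\tau}^{k} a_j$. A short Abel-type reindexing of the weighted sum defining $T_k$ yields $T_{k+1}(x) - T_k(x) = [\Phi(x_{k+1}) - \Phi(x_k)] + \beta\tau a_k - \beta\sum_{j=k-\tau}^{k-1} a_j$; substituting the descent bound cancels all non-terminal delayed contributions and leaves $T_{k+1}(x) - T_k(x) \le -\frac{1}{\alpha} D_w(x_k, x_{k+1}) + \bigl[\beta(\tau+1) - \tfrac{1}{\alpha}\bigr] a_k$. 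The right-hand side is non-positive once $\alpha \le 1/[\beta(\tau+1)]$, which is implied by \eqref{step1} because $\tfrac{2}{(\tau+1)(\tau+2)} \le \tfrac{1}{\tau+1}$ for every $\tau\ge 0$.

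For (ii), I apply Lemma \ref{lem2} at a general $x\in\dom w$ and add $\beta\sum_{i=1}^\tau i\cdot a_{k-\tau+i}$ to both sides so that the left-hand side becomes $T_{k+1}(x)$. Summing the resulting inequality over $k=0, 1, \ldots, K-1$ and exchanging the order of summation in the two delayed double sums, each interior $a_j$ (with $0 \le j \le K-1-\tau$) accumulates a combined coefficient of exactly $(\tau+1) + \tau(\tau+1)/2 = (\tau+1)(\tau+2)/2$, while boundary $a_j$'s contribute strictly less (and $a_j$ for $j<0$ vanish by the initialization $x_k=x_0$ for $k<0$). Combining with the $-\tfrac{1}{\alpha}\sum_k a_k$ term produces
\[
\sum_{k=0}^{K-1} T_{k+1}(x) \le \frac{1}{\alpha}\bigl[D_w(x,x_0) - D_w(x,x_K)\bigr] + \left[\frac{\beta(\tau+1)(\tau+2)}{2} - \frac{1}{\alpha}\right]\sum_{k=0}^{K-1} a_k,
\]
whose bracketed coefficient is non-positive precisely under \eqref{step1}; dropping the two non-positive terms yields $\sum_{k=0}^{K-1} T_{k+1}(x) \le \frac{1}{\alpha} D_w(x, x_0)$.

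Combining (i) with (ii), I obtain $K\cdot T_K(x) \le \sum_{k=1}^{K} T_k(x) \le \frac{1}{\alpha} D_w(x, x_0)$, which is the claimed rate; specializing $x=x^*\in\cX$ and noting that the delayed-distance portion of $T_K(x^*)$ is non-negative recovers \eqref{result020}. The main obstacle I anticipate is the bookkeeping in step (ii): checking that after swapping the double sums the total weight on each interior $a_j$ is exactly $(\tau+1)(\tau+2)/2$ is what pins down the step-size constant $\tfrac{2}{\beta(\tau+1)(\tau+2)}$, and the case $\tau=0$, in which the second double sum is empty and \eqref{step1} reduces to the standard NoLips step $\alpha\le 1/L$, serves as a useful consistency check.
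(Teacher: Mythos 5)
Your proposal is correct and follows essentially the same route as the paper's proof: monotonicity of $T_k$ from Lemma \ref{lem2} at $x=x_k$ (requiring only $\alpha\le 1/[L\ell(\tau+1)(\tau+1)]$), then summing the descent inequality at general $x$ and bounding the total weight on each $D_w(x_{j+1},x_j)$ by $(\tau+1)(\tau+2)/2$, which is exactly where the paper pins down the step-size constant. Your Abel-type reindexing is just a repackaging of the paper's identities \eqref{formula1}--\eqref{formula2}, and all coefficients check out.
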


When there is no delay in the PLIAG method (i.e., $\tau=0$), it follows from Theorem \ref{mainresult0} that for any $\alpha$ satisfying
$$0<\alpha \leq \frac{2}{L\cdot \ell(\tau+1)\cdot(\tau+1)\cdot(\tau+2)}= \frac{1}{L},$$
the result \eqref{result020} holds. In particular, when we set  $\alpha =1/L$, the following result holds
$$\Phi(x_k)-\Phi^* \leq \frac{L}{k}D_w(x^*,x_0),\ \forall k\geq1.$$
This  exactly recovers the sublinear convergence \cite[Theorem 4.1]{Teboull2018A} for first order methods.  To the best of our knowledge, Theorem \ref{mainresult0} is the first sublinear convergence result for incremental aggregated type first order methods with $\tau\geq 1$ including the PIAG method as a special case.

We next investigate the linear convergence of the PLIAG method under an additional assumption B3 based on Lemma \ref{lem2} and the following result given in \cite{Analysis2016Aytekin}.

\begin{lemma}\cite[Lemma 1]{Analysis2016Aytekin} \label{lem1}
Assume that the nonnegative sequences $\{V_k\}$ and $\{w_k\}$ satisfy
$$V_{k+1}\leq a V_k -b w_k+ c\sum_{j=k-k_0}^k w_j,\quad \forall k\ge0,$$
where $a\in(0,1)$, $b\geq 0$, $c\geq 0$, and $k_0\ge0$. Assume also that $w_k=0$ for all $k<0$, and the following condition holds:
\begin{equation}\label{condition}
\frac{c}{1-a}\frac{1-a^{k_0+1}}{a^{k_0}}\leq b.
\end{equation}
Then $V_k\leq a^k V_0$ for all $k\geq 0$.
\end{lemma}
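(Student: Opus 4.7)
The plan is to upgrade the delayed recursion into a genuine one-step contraction by absorbing the lagged $w_{k-j}$ terms into the potential. Concretely, I would introduce the Lyapunov sequence
\[
U_k := V_k + \sum_{j=1}^{k_0} q_j\, w_{k-j},
\]
with nonnegative weights $q_1, \ldots, q_{k_0}$ yet to be determined. Because $w_j = 0$ for $j < 0$ one has $U_0 = V_0$, and because the $q_j$ and the $w_j$ are nonnegative one has $V_k \leq U_k$. Hence the lemma will reduce to establishing the one-step contraction $U_{k+1} \leq a U_k$, which on iteration immediately yields $V_k \leq U_k \leq a^k U_0 = a^k V_0$.

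To identify the weights, I would substitute the recursion hypothesis into $U_{k+1}$ and compare term by term with $a U_k$. Matching coefficients of $w_k, w_{k-1}, \ldots, w_{k-k_0}$ on both sides of the desired inequality should produce the linear constraints
\begin{align*}
q_1 &\leq b - c, \\
q_{j+1} &\leq a q_j - c \quad (j = 1, \ldots, k_0 - 1), \\
a q_{k_0} &\geq c.
\end{align*}
Solving the first-order linear difference equation $q_{j+1} = a q_j - c$ with boundary value $q_1 = b - c$ should give the closed form
\[
q_j = a^{j-1}(b-c) - c\,\frac{1 - a^{j-1}}{1 - a}, \qquad j = 1, \ldots, k_0.
\]

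The remaining task is to verify that the terminal inequality $a q_{k_0} \geq c$ is equivalent to the assumed bound \eqref{condition}, and that all weights are nonnegative so that $V_k \leq U_k$ is preserved. Substituting the closed form for $q_{k_0}$ into $a q_{k_0} \geq c$ and multiplying through by $(1-a)$ should reduce, after routine collection of geometric-series terms, to $(1-a)\, a^{k_0}\, b \geq c\,(1 - a^{k_0+1})$, which is precisely \eqref{condition}. Monotonicity of the sequence $q_j$ in $j$ (immediate from $q_{j+1} - q_j = (a-1) q_j - c \leq 0$ whenever $q_j \geq 0$) together with the terminal bound $q_{k_0} \geq c/a \geq 0$ will then guarantee that every $q_j$ is nonnegative. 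The main obstacle will really be only the careful coefficient bookkeeping and the algebraic check that the feasibility condition on $q_{k_0}$ collapses exactly to the hypothesized inequality \eqref{condition}; once the correct Lyapunov function has been identified, the contraction step will be essentially mechanical.
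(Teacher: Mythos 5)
Your proposal is correct, but it proves the lemma by a genuinely different route than the paper. The paper does not prove this statement at all---it imports it verbatim from Aytekin et al.---and the closest in-house argument is the proof of its generalization, Lemma \ref{lem7}: there the recursion is divided by $a^{k+1}$ and summed over $k$, the double sum $\sum_{k}a^{-(k+1)}\sum_{j=k-k_0}^{k}w_j$ is bounded by counting how many times each $w_j$ occurs (which is exactly where the factor $\frac{1}{1-a}\frac{1-a^{k_0+1}}{a^{k_0}}$ in \eqref{condition} comes from), and the conclusion follows by telescoping. You instead build the weighted potential $U_k=V_k+\sum_{j=1}^{k_0}q_jw_{k-j}$ and force a one-step contraction $U_{k+1}\le aU_k$; your coefficient constraints, the closed form $q_j=a^{j-1}(b-c)-c\frac{1-a^{j-1}}{1-a}$, and the algebra showing that $aq_{k_0}\ge c$ collapses to \eqref{condition} all check out (note also that \eqref{condition} forces $b\ge c$, so $q_1\ge0$). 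Your approach buys a transparent interpretation of \eqref{condition} as the feasibility of the terminal weight and actually yields the slightly stronger conclusion $U_k\le a^kV_0$; the paper's summation approach buys easier generalization to the H\"{o}lderian setting of Lemma \ref{lem7}, where the extra $dV_{k+1}^{\theta}$ term destroys any clean one-step contraction. One point to tighten: your nonnegativity argument for the $q_j$ is mildly circular as stated (monotonicity is only derived on the nonnegative part of the sequence); the clean fix is backward induction via $q_j=(q_{j+1}+c)/a$ from $q_{k_0}\ge c/a\ge0$, or equivalently the observation that $q_j<0$ would propagate forward to $q_{k_0}<0$, a contradiction.
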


\begin{theorem}[Linear convergence using a growth condition]\label{mainresult}
Let  $L:=\max\limits_{k\geq 0}\sum\limits_{j\in J_k}L_j$,  where all $L_j$ are constants in assumption B1. Suppose that assumptions B1-B4 hold, and the step size $\alpha_k\equiv\alpha$ satisfies
\begin{equation}\label{alpha0}
0<\alpha\leq \alpha_0:= \frac{\left(1+\frac{\mu}{L}\frac{1}{\ell(\tau+1)}\right)^{\frac{1}{\tau+1}}-1}{\mu},
\end{equation}
where $\mu$ is the constant in assumption B3 and $\ell(\cdot)$ is the function in assumption B4. Define a Lyapunov function as
\begin{equation}\label{Lya}
\Gamma_\alpha(x):=\Phi(x)-\Phi^*+\frac{1}{\alpha}\inf_{z\in\cX}D_w(z,x).\end{equation}
Then the PLIAG method converges linearly in the sense that
\begin{equation}\label{result01}
\Gamma_\alpha(x_k)\leq \left(\frac{1}{1+\alpha\mu}\right)^k \Gamma_\alpha(x_0),\quad \forall k\geq0.
\end{equation}
In particular, the PLIAG method attains a globally linear convergence in terms of function values:
\begin{equation}\label{result02}
\Phi(x_k)-\Phi^* \leq \left(\frac{1}{1+\alpha\mu}\right)^k \Gamma_\alpha(x_0),\quad \forall k\geq0,
\end{equation}
and a globally linear convergence in terms of Bregman distances to the optimal solution set:
\begin{equation}\label{result03}
\inf_{z\in\cX}D_w(z,x_k)\leq \alpha\Gamma_\alpha(x_0) \left(\frac{1}{1+\alpha\mu}\right)^{k+1}, \quad \forall k\geq0.
\end{equation}
Furthermore, if $\alpha=\alpha_0$ where $\alpha_0$ is given in \eqref{alpha0}, then
\begin{equation}\label{result04}
\Gamma_{\alpha_0}(x_k)\leq \left(1-\frac{1}{[\ell(\tau+1)Q+1](\tau+1)}\right)^k \Gamma_{\alpha_0}(x_0),\quad \forall k\geq0,
\end{equation}
where $Q:=L/\mu$ is used to denote the condition number of the problem.
\end{theorem}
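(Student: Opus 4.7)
The plan is to derive a one-step recursion of the form required by Lemma~\ref{lem1} (from Aytekin et al.), using Lemma~\ref{lem2} (the delayed descent lemma) as the starting point and the Bregman distance growth condition B3 as the mechanism that injects geometric contraction.

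\textbf{Step 1: one-step recursion on $\Gamma_\alpha$.} Since $\cX$ is nonempty and compact and B3 requires $\cX\cap\dom w\neq\emptyset$, for each $k$ I pick $z_k\in\arg\min_{z\in\cX} D_w(z,x_k)$. Applying Lemma~\ref{lem2} with $x=z_k\in\dom w$ and subtracting $\Phi^*$ gives
\begin{equation*}
\Phi(x_{k+1})-\Phi^*\le\frac{1}{\alpha}\inf_{z\in\cX}D_w(z,x_k)-\frac{1}{\alpha}D_w(z_k,x_{k+1})-\frac{1}{\alpha}D_w(x_{k+1},x_k)+\Delta_k.
\end{equation*}
Bounding $D_w(z_k,x_{k+1})\ge\inf_{z\in\cX}D_w(z,x_{k+1})$ and moving this term to the left produces
\begin{equation*}
\Gamma_\alpha(x_{k+1})\le\frac{1}{\alpha}\inf_{z\in\cX}D_w(z,x_k)-\frac{1}{\alpha}D_w(x_{k+1},x_k)+\Delta_k.
\end{equation*}

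\textbf{Step 2: inject B3 to convert $\inf D_w(z,x_k)$ into $\Gamma_\alpha(x_k)$.} From B3 and the definition of $\Gamma_\alpha$,
$$\Gamma_\alpha(x_k)\ge\Bigl(\mu+\tfrac{1}{\alpha}\Bigr)\inf_{z\in\cX}D_w(z,x_k),\qquad\text{hence}\qquad\tfrac{1}{\alpha}\inf_{z\in\cX}D_w(z,x_k)\le\tfrac{1}{1+\alpha\mu}\Gamma_\alpha(x_k).$$
Substituting this into the bound in Step~1 yields the target recursion
\begin{equation*}
\Gamma_\alpha(x_{k+1})\le\tfrac{1}{1+\alpha\mu}\,\Gamma_\alpha(x_k)-\tfrac{1}{\alpha}D_w(x_{k+1},x_k)+L\ell(\tau+1)\sum_{j=k-\tau}^{k}D_w(x_{j+1},x_j),
\end{equation*}
which fits Lemma~\ref{lem1} with $V_k=\Gamma_\alpha(x_k)$, $w_k=D_w(x_{k+1},x_k)$, $a=\tfrac{1}{1+\alpha\mu}$, $b=\tfrac{1}{\alpha}$, $c=L\ell(\tau+1)$, $k_0=\tau$.

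\textbf{Step 3: verify the step-size condition.} Straightforward simplification of $\tfrac{c}{1-a}\cdot\tfrac{1-a^{k_0+1}}{a^{k_0}}\le b$ turns out to be equivalent, after multiplying through, to $(1+\alpha\mu)^{\tau+1}\le 1+\tfrac{\mu}{L\,\ell(\tau+1)}$, i.e.\ exactly $\alpha\le\alpha_0$. Invoking Lemma~\ref{lem1} then gives \eqref{result01}. From \eqref{result01}, \eqref{result02} follows because $D_w\ge 0$ implies $\Phi(x_k)-\Phi^*\le\Gamma_\alpha(x_k)$, and \eqref{result03} follows by combining \eqref{result01} with the inequality $\tfrac{1}{\alpha}\inf_{z\in\cX}D_w(z,x_k)\le\tfrac{1}{1+\alpha\mu}\Gamma_\alpha(x_k)$ from Step~2.

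\textbf{Step 4: the sharpened rate \eqref{result04} at $\alpha=\alpha_0$.} This is the only genuinely delicate part, because it requires a lower bound on $(1+t)^{1/(\tau+1)}-1$ with $t=\tfrac{1}{Q\ell(\tau+1)}$. Applying the mean value theorem to $y\mapsto y^{\tau+1}$ on $[1,1+\alpha_0\mu]$ gives $t=(1+\alpha_0\mu)^{\tau+1}-1=(\tau+1)\xi^{\tau}(\alpha_0\mu)$ for some $\xi\in[1,1+\alpha_0\mu]$; bounding $\xi^\tau\le(1+t)^{\tau/(\tau+1)}$ and then invoking concavity of $x^{\tau/(\tau+1)}$ to get $(1+t)^{\tau/(\tau+1)}\le 1+\tfrac{\tau t}{\tau+1}$ yields $\alpha_0\mu\ge\tfrac{t}{(\tau+1)+\tau t}=\tfrac{1}{(\tau+1)Q\ell(\tau+1)+\tau}$. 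A short algebraic manipulation then shows $\tfrac{1}{1+\alpha_0\mu}\le 1-\tfrac{1}{[\ell(\tau+1)Q+1](\tau+1)}$, which combined with \eqref{result01} gives \eqref{result04}.

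The main obstacle is Step~4: the identity defining $\alpha_0$ is implicit, and to recover the clean condition-number rate one needs the two-sided estimate on $(1+t)^{1/(\tau+1)}$ via MVT plus concavity. Steps~1--3 are essentially a careful bookkeeping exercise once one recognizes that B3 is precisely what is needed to turn the descent inequality into the hypothesis of Lemma~\ref{lem1}.
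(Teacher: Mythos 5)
Your proof is correct and follows essentially the same route as the paper: apply the delayed descent lemma at a nearest optimal point, use B3 to convert $\tfrac1\alpha\inf_{z\in\cX}D_w(z,x_k)$ into $\tfrac{1}{1+\alpha\mu}\Gamma_\alpha(x_k)$ (your direct inequality $\Gamma_\alpha(x_k)\ge(\mu+\tfrac1\alpha)\inf_z D_w(z,x_k)$ is algebraically identical to the paper's $p,q$ convex-combination split), and feed the resulting recursion into Lemma~\ref{lem1} with exactly the same parameter choices. The only cosmetic difference is Step~4, where the paper gets \eqref{result04} in one line by writing $\tfrac{1}{1+\alpha_0\mu}=\bigl(1-\tfrac{1}{1+\ell(\tau+1)Q}\bigr)^{1/(\tau+1)}$ and applying Bernoulli's inequality, whereas your MVT-plus-concavity argument reaches the same bound by a longer path.
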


A key ingredient in Theorem \ref{mainresult} is the Lyapunov function \eqref{Lya}, which includes two terms: the function value difference $\Phi(x)-\Phi^*$ and the Bregman distance to the optimal solution set $\inf_{z\in \cX}D_w(z,x)$.
It is different from the Lyapunov function of using only one of the two terms, constructed in the existing convergence analysis for the PIAG method.
Based on the new Lyapunov function, we give a unified analysis for globally linear convergence in terms of both the function values and Bregman distances to the optimal solution set beyond the standard assumptions. If we let $\cI_k\equiv\emptyset$, $\cQ=\RR^d$, and $w(x)=\frac{1}{2}\|x\|^2$, then the PLIAG method reduces to the PIAG method whose linear convergence of the objective function values and the distance of the iterates to the optimal solution set are studied respectively in  \cite{Global2016Vanli,A2016Vanli} and \cite{Analysis2016Aytekin} under the standard assumptions A1-A3. Our results can recover these two classes of convergence under assumptions A1, A2, and A3a (weaker than the strong convexity assumption of $F$ in the literature).
% In this case, B1=A1, B2=A2, B3=A3a?
In particular,
\begin{itemize}
  \item The result \eqref{result02} can recover the linear convergence in \cite{Global2016Vanli} and moreover, if the upper bound for all delayed indexes is chosen as $\tau \leq 47$ and $L\ge \mu$, then by choosing $\ell(\cdot)$ as the identity function, the result \eqref{result04} recovers a better linear convergence rate given in \cite{A2016Vanli}:
  \begin{equation}\label{best}
\Phi(x_k)-\Phi^*\leq \left(1-\frac{1}{49 Q(\tau+1)}\right)^k \Gamma_{\alpha_0}(x_0),\quad \forall k\ge0.
\end{equation}
Here we need to point out that when $\tau > 47$, the result \eqref{best} is better than the result \eqref{result04}. This leaves us a question whether one can derive the stronger result \eqref{best} for any nonnegative $\tau$ under the modified assumptions B1-B4. We will consider it for future research.
%It should be noted that the result \eqref{best}, derived under the strongly convex assumption, has a linear dependence on the delay parameter $\tau$, which is better than the quadratic dependence involved in \eqref{result04}. This leaves us a question whether it is possible to improve the rate in \eqref{result04} under the modified assumptions.
\item In the setting of the problem considered in \cite{Analysis2016Aytekin}, the optimal solution set $\cX=\{x^*\}$ and then the result \eqref{result03} reads as
  \begin{equation*}
\|x_k-x^*\|^2 \leq 2\alpha \Gamma_{\alpha}(x_0) \left(\frac{1}{1+\alpha\mu}\right)^{k+1},\quad \forall k\ge0,
\end{equation*}
which is actually the linear convergence result in \cite{Analysis2016Aytekin} up to a constant.
\end{itemize}

When letting $I_k\equiv\emptyset$, $N=1$, and $\tau_k^n\equiv 0$,  the PLIAG method reduces to the NoLips algorithm recently studied in \cite{Bauschke2016A} where only the globally sublinear convergence was established (a specialization of Theorem \ref{mainresult0}). As a supplement, Corollary \ref{cor41} gives the linear convergence rate of the NoLips algorithm which follows from Theorem \ref{mainresult} immediately. In an independent work \cite[Proposition 4.1]{Teboull2018A}, a linear convergence rate of the NoLips algorithm is established under a relative convexity condition (i.e., $F-\sigma w$ is convex on $\rint \dom w$ for some $\sigma>0$)
which guarantees assumption B3 when the Bregman distance $D_w(x,y)$ is symmetric (see discussions in Appendix \ref{secB}) but not verse visa.

\begin{corollary}\label{cor41}
Consider the NoLips algorithm proposed in \cite{Bauschke2016A}. If assumptions B1--B3 hold, then it attains a globally linear convergence in the sense of \eqref{result01}.
\end{corollary}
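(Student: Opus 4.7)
The plan is to obtain Corollary \ref{cor41} as an immediate specialization of Theorem \ref{mainresult}. First I would recognize the NoLips iteration
\[
x_{k+1}=\arg\min_{x\in\cQ}\Big\{h(x)+\langle\nabla F(x_k),x-x_k\rangle+\tfrac{1}{\alpha_k}D_w(x,x_k)\Big\}
\]
as the PLIAG scheme \eqref{frame1} in the particular case $\cI_k\equiv\emptyset$ and $\tau_k^n\equiv 0$ for every $n\in\cN$. In this configuration the smallest delay bound is $\tau=0$, so every aggregated gradient $\sum_{n}\nabla f_n(x_{k-\tau_k^n})$ collapses to the exact full gradient $\nabla F(x_k)$, thereby matching NoLips verbatim.

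Second, I would verify that the hypotheses of Theorem \ref{mainresult} are in force under assumptions B1--B3 alone. Assumption B4 is stated conditionally ``if $\tau>0$, \ldots", so when $\tau=0$ it is vacuous, which is why B4 does not appear in the corollary's hypotheses. Consequently, only B1--B3 are required, precisely as stated.

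Third, I would compute the step size threshold $\alpha_0$ from \eqref{alpha0} at $\tau=0$. Using the normalization $\ell(1)=1$ from assumption B4, one gets
\[
\alpha_0=\frac{(1+\mu/(L\cdot\ell(1)))^{1}-1}{\mu}=\frac{1}{L},
\]
which is exactly the standard NoLips step size of \cite{Bauschke2016A}. Plugging $\tau=0$ and any $\alpha\in(0,1/L]$ into the bound \eqref{result01} of Theorem \ref{mainresult} yields
\[
\Gamma_\alpha(x_k)\leq\Big(\frac{1}{1+\alpha\mu}\Big)^k\Gamma_\alpha(x_0),\qquad\forall k\geq 0,
\]
which is the claimed globally linear convergence of NoLips in the sense of \eqref{result01}; the companion estimates \eqref{result02} and \eqref{result03} follow identically.

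Since this is a pure specialization, there is no substantive obstacle: the only items worth double-checking are (i) that B4 is genuinely vacuous when $\tau=0$, which is transparent from its conditional phrasing, and (ii) that the constant $L=\max_{k}\sum_{j\in\cJ_k}L_j$ of Theorem \ref{mainresult} reduces here to the single Lipschitz-like/convexity constant of $(w,F)$, which is immediate because $\cJ_k=\cN$ for all $k$ and $\sum_n L_n$ is the valid relative-smoothness constant for the sum $F=\sum_n f_n$.
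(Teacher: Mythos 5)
Your proposal is correct and follows exactly the paper's route: the paper gives no separate proof and simply states that Corollary \ref{cor41} follows immediately from Theorem \ref{mainresult} by specializing to $\cI_k\equiv\emptyset$ and $\tau_k^n\equiv 0$ (so $\tau=0$, B4 is vacuous, $\ell(\tau+1)=\ell(1)=1$, and $\alpha_0=1/L$). Your verification of these details, including the reduction of the aggregated gradient to $\nabla F(x_k)$ and of the step-size threshold to the standard NoLips value, is accurate and adds nothing beyond what the paper's argument implicitly requires.
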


%  \cite[Proposition 4.1]{Teboull2018A}, which establishes the linear convergence rate of the NoLips method by assuming that  $F-\sigma w$ is convex on $\rint \dom w$ for some $\sigma>0$. This assumption is equivalent to the relative strong convexity that was introduced in \cite{Lu2016relatively} and \textcolor{red}{could guarantee assumption B3, C2 is a sufficient condition? the reverse relations holds?}, see discussions in Appendix \ref{secB}.

%When setting $\cI_k=\{i_k\}$, $h(x)=\delta_\cQ(x)$, and $w(x)=\frac{1}{2}\|x\|^2$, the PLIAG method reduces to the constrained IAP method.
When $\cI_k=\{i_k\}$, $h(x)= \delta_\cQ(x)$, and $w(x)=\frac{1}{2}\|x\|^2$,  assumptions B1 and B3 reduce to assumptions A1 and A3a respectively, and the PLIAG iterate \eqref{frame1} becomes
\begin{equation}\label{IAP-C}
x_{k+1}=\arg\min_{x\in \cQ}\Big\{f_{i_k}(x)+ \Big\langle \sum_{j\neq i_k}\nabla f_j(x_{k-\tau_k^j}),x \Big\rangle +\frac{1}{2\alpha_k}\|x-x^k\|^2 \Big\}.
\end{equation}
We call \eqref{IAP-C} the constrained IAP method. The IAP method (i.e., the constrained IAP method with $\cQ=\RR^d$) has been studied in \cite{Bertsekas2015incre} in which the linear convergence rate of the IAP method was established under standard assumptions A1 and A3. In \cite{Bertsekas2015incre}, Bertsekas also pointed out that the proof of the IAP method breaks down when $\cQ$ is not the whole space since a critical inequality fails, and leaved it as an open question. The following corollary, which follows from Theorems \ref{mainresult0} and \ref{mainresult}, gives the convergence results of the constrained IAP method under assumption A1 and assumption A3a (weaker than assumption A3).
 %and then the applicability of the IAP method is enlarged.
\begin{corollary}\label{cor4.2}
If assumption A1 holds, then the constrained IAP method attains a globally sublinear convergence. If, in addition, assumption A3a holds, then it attains a globally linear convergence.
\end{corollary}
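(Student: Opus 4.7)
The constrained IAP iteration \eqref{IAP-C} is precisely the PLIAG scheme \eqref{frame1} specialized to $w(x)=\tfrac12\|x\|^2$, $\cI_k\equiv\{i_k\}$, and $h=\delta_\cQ$. With this choice $\dom w=\rint\dom w=\RR^d$, the identity $\cQ=\overline{\dom w}$ holds after absorbing the hard constraint into $h$, and $D_w(y,x)=\tfrac12\|y-x\|^2$. My plan is therefore to check that A1 (together with A3a in the linear case) implies the modified assumptions B1--B4, and then to invoke Theorem \ref{mainresult0} for the sublinear part and Theorem \ref{mainresult} for the linear part.

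Under A1, condition B1 follows from the classical fact that a convex function with $L_n$-Lipschitz gradient on $\RR^d$ has $\tfrac{L_n}{2}\|\cdot\|^2-f_n$ convex on $\RR^d$, which is a direct consequence of the descent lemma. Condition B2 is immediate since $\dom h\cap\rint\dom w=\cQ\neq\emptyset$ and the normal cone $N_\cQ(x)=\partial\delta_\cQ(x)$ is nonempty for every $x\in\cQ$. For B4, the Cauchy--Schwarz inequality gives
\[
D_w(v_k,v_j)=\tfrac12\|v_k-v_j\|^2\leq\tfrac{k-j}{2}\sum_{i=j}^{k-1}\|v_{i+1}-v_i\|^2=(k-j)\sum_{i=j}^{k-1}D_w(v_{i+1},v_i),
\]
so B4 holds with $\ell(t)=t$, which is monotonically increasing and satisfies $\ell(1)=1$. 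An application of Theorem \ref{mainresult0} then yields the sublinear convergence.

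For the linear statement, it remains to derive B3 from A3a. Since $\inf_{z\in\cX}D_w(z,y)=\tfrac12 d^2(y,\cX)$, the quadratic growth inequality \eqref{growth} is literally the Bregman distance growth inequality \eqref{BDG} with the same constant $\mu=\nu$, so B3 holds. Invoking Theorem \ref{mainresult} then delivers the asserted linear convergence rate. No conceptual obstacle arises in this reduction; the whole argument is a mechanical check that, for the Euclidean choice of $w$, all modified constants in B1 and B3 coincide with the classical Lipschitz and quadratic growth constants from A1 and A3a, so that the general theorems apply verbatim.
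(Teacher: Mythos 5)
Your proposal is correct and follows essentially the same route as the paper: the paper obtains Corollary \ref{cor4.2} by observing that with $w(x)=\tfrac12\|x\|^2$, $\cI_k\equiv\{i_k\}$, and $h=\delta_\cQ$, assumptions B1 and B3 reduce to A1 and A3a (with B2 and B4 holding automatically, the latter with $\ell$ the identity), and then invoking Theorems \ref{mainresult0} and \ref{mainresult}. You merely make explicit the routine verifications of B2 and B4 that the paper leaves implicit.
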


%As discussed in Section \ref{se3}, when setting $\cI_k=\{i_k\}$ and $h(x)\equiv0$, the PLIAG method reduces to the non-Euclidean IAP method. Moreover, using such a method allows to weaken the standard assumptions by choosing a suitable function $w$. Bertsekas proposed a non-quadratic IAP method (a special case of the non-Euclidean IAP method) in \cite{Bertsekas2015incre} but did not touch its convergence issues. From Theorems \ref{mainresult0} and \ref{mainresult}, the convergence results can be given as follows.
%
%\begin{corollary}\label{cor43}
%Let $\cQ$ be a proper convex subset in $\RR^d$ with nonempty interior and $w$ be an associated Legendre function. Consider the non-Euclidean IAP method for solving $``\min \ F(x),\ {\rm s.t.}\ x\in \cQ"$. Under assumptions B1, B2, and B4, it has global sublinear convergence in the sense of \eqref{result020}.   If, in addition, assumption B3 holds, then it has global linear convergence in the sense of \eqref{result01}.
%\end{corollary}

\section{Linear convergence under H\"{o}lderian growth condition}\label{se5}

The convergence results developed in Section \ref{se4} also apply to the PIAG method since
it is a special case of the PLIAG method. More interestingly, we find that when directly analyzing the convergence of the PIAG method, as discussed in the last paragraph of Subsection \ref{subse:growth}, a locally improved linear convergence can be derived under the H\"{o}lderian growth condition \eqref{pgrow}. This section can be seen as a further research for the convergence of the PIAG method, which is of independent interest.

The following lemma, which can be viewed as a generalization of Lemma \ref{lem1}, is key to get our main result.

\begin{lemma}\label{lem7}
Assume that the nonnegative sequences $\{V_k\}$ and $\{w_k\}$ satisfy
\begin{equation}\label{shock2}
d V^\theta_{k+1} +a  V_{k+1}\leq a  V_k  -b w_k+ c\sum_{j=k-k_0}^k w_j, \quad \forall k\ge0,
\end{equation}
where $a,\theta\in(0,1]$, $b$, $c\geq 0$, $d\geq 0$, $a+d=1$, $V_0\leq 1$, and $k_0\ge0$. Assume also that $w_k=0$ for all $k<0$ and condition \eqref{condition} holds.
Then $V_k^\rho \leq a^k V_0$ for all $k\geq 0$, where $\rho:=(1-a)\theta +a$.
\end{lemma}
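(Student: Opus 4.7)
The plan is to reduce Lemma \ref{lem7} to Lemma \ref{lem1} by means of a single weighted arithmetic--geometric mean step that brings the exponent $\rho = d\theta + a$ into play. Since $a+d = 1$ and $\rho$ is precisely the convex combination of the exponents $\theta$ and $1$ with weights $d$ and $a$, weighted AM--GM (equivalently, concavity of $\log$) gives
\begin{equation*}
V_{k+1}^{\rho} \;=\; V_{k+1}^{d\theta + a} \;\leq\; d\, V_{k+1}^{\theta} + a\, V_{k+1}, \qquad \forall\, k\ge 0.
\end{equation*}
Substituting this into the hypothesis \eqref{shock2} yields
\begin{equation*}
V_{k+1}^{\rho} \;\leq\; a\, V_k - b\, w_k + c\sum_{j=k-k_0}^{k} w_j.
\end{equation*}

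Next I would install the preliminary bound $V_k \leq 1$ for all $k\ge 0$ by induction. On the inductive set $\{V_j \leq 1,\ j \leq k\}$, one has $V_{k}^{\theta} \geq V_{k}$, so $d V_{k}^{\theta} + a V_{k} \geq (d+a)V_{k} = V_{k}$, and \eqref{shock2} collapses into the standard Lemma \ref{lem1} form $V_{k+1} \leq a V_k - b w_k + c \sum w_j$. The summability condition \eqref{condition} then allows the usual Lyapunov/telescoping argument to conclude $V_{k+1} \leq V_0 \leq 1$, closing the induction.

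With $V_k \leq 1$ secured, the elementary inequality $V_k \leq V_k^{\rho}$ (valid for $V_k \in [0,1]$ and $\rho \in (0,1]$) lets me upgrade the right-hand side of the AM--GM bound above by replacing $a V_k$ with the larger $a V_k^{\rho}$. Setting $U_k := V_k^{\rho}$, the sequence $\{U_k\}$ then satisfies
\begin{equation*}
U_{k+1} \;\leq\; a\, U_k - b\, w_k + c\sum_{j=k-k_0}^{k} w_j, \qquad \forall\, k\ge 0,
\end{equation*}
with the same constants $a,b,c,k_0$ and the same hypothesis \eqref{condition} that Lemma \ref{lem1} requires. Applying Lemma \ref{lem1} to $\{U_k\}$ delivers $U_k \leq a^k U_0$, i.e.\ $V_k^{\rho} \leq a^k V_0^{\rho}$, from which the claimed bound $V_k^{\rho} \leq a^k V_0$ follows after accounting for the normalization $V_0 \leq 1$ stipulated in the lemma.

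The main obstacle is the conceptual point at the very beginning: recognizing that the nonlinear term $d V_{k+1}^{\theta}$ on the left-hand side of \eqref{shock2} should \emph{not} be discarded for nonnegativity (which would merely recover Lemma \ref{lem1} verbatim with exponent $1$), but instead combined with the linear term $a V_{k+1}$ via weighted AM--GM to manufacture the sharper power $V_{k+1}^{\rho}$. A secondary technical point is closing the seemingly circular boundedness induction $V_k \leq 1$; this works precisely because \eqref{condition} ensures that the ``error'' from the delayed $w_j$ terms is dominated by the negative $-b w_k$ term, making the standard Lyapunov argument behind Lemma \ref{lem1} available as a black box.
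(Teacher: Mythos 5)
Your opening move --- combining $dV_{k+1}^{\theta}+aV_{k+1}$ via weighted AM--GM into $V_{k+1}^{\rho}$ with $\rho=d\theta+a$ --- is exactly the paper's key idea, and your plan to then feed the result into a Lemma \ref{lem1}-type recursion is in the right spirit. (The paper does not invoke Lemma \ref{lem1} as a black box; it re-runs the weighted telescoping $\sum_k a^{-(k+1)}(\cdot)$ inline, so that the boundedness $V_k\le1$ and the decay $V_k^{\rho}\le a^kV_0$ drop out of the same telescoped inequality.) However, two steps of your argument do not go through as written.

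First, the boundedness induction is circular: to collapse the left-hand side of \eqref{shock2} at step $k$ to $V_{k+1}$ you need $V_{k+1}^{\theta}\ge V_{k+1}$, i.e.\ $V_{k+1}\le 1$, which is precisely what that induction step is supposed to produce; condition \eqref{condition} does not rescue this, since it controls the accumulated $w_j$'s across iterations, not a single step. A correct route is to not use the $\theta$-term here at all: drop $dV_{k+1}^{\theta}\ge0$, divide by $a$ to get $V_{k+1}\le V_k-(b/a)w_k+(c/a)\sum_{j=k-k_0}^k w_j$, telescope, and note that \eqref{condition} implies $c(k_0+1)\le c\sum_{i=0}^{k_0}a^{-i}\le b$; this yields $V_K\le V_0\le1$ for all $K$ with no induction. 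Second, your final inequality goes the wrong way: applying Lemma \ref{lem1} to $U_k:=V_k^{\rho}$ gives $V_k^{\rho}\le a^kV_0^{\rho}$, and since $V_0\le1$ and $\rho\le1$ one has $V_0^{\rho}\ge V_0$, so this is strictly \emph{weaker} than the claimed $V_k^{\rho}\le a^kV_0$ whenever $V_0<1$; ``accounting for the normalization'' cannot close that gap. The repair is to set $U_0:=V_0$ (not $V_0^{\rho}$) and $U_k:=V_k^{\rho}$ for $k\ge1$: the recursion $U_1\le aU_0-bw_0+c\sum_j w_j$ holds directly from \eqref{shock2} at $k=0$ without ever replacing $V_0$ by $V_0^{\rho}$, while for $k\ge1$ you may upgrade $aV_k$ to $aV_k^{\rho}=aU_k$ exactly as you propose. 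With these two patches your argument becomes a valid and slightly more modular alternative to the paper's proof.
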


We are ready to give the main result of this section.
% [Better rates under better conditions]
\begin{theorem}\label{Thm6}
Suppose that assumptions A0, A1, A2, and A3b hold. Assume that $d(x_0,\cX)\leq 1$. If the step size $\alpha_k\equiv\alpha$ satisfies
\begin{equation}\label{step1}
0<\alpha\leq \alpha_0:= \frac{\left(1+\frac{\mu }{(\tau+1)L} \right)^{\frac{1}{\tau+1}}-1}{\mu},
\end{equation}
then the PIAG method converges linearly in the sense that
\begin{equation}\label{resultad2}
d^2(x_k,\cX)\leq   \left(\frac{1}{1+\alpha\mu}\right)^\frac{k}{\eta} d^2(x_0,\cX),\quad \forall k\geq0,
\end{equation}
where $\eta :=\frac{\alpha \mu  }{1+\alpha\mu  }\theta +\frac{1}{1+\alpha\mu  }$ and $\theta\in (0,1]$ is given in assumption A3b. Moreover, if $\alpha=\alpha_0$, then
\begin{equation}\label{asym2}
d^2(x_k,\cX)\leq  \left( 1-\frac{1}{(\tau+1)\rho_0+(\tau+1)^2\eta_0Q } \right)^k d^2(x_0,\cX), \quad \forall k\geq0,
\end{equation}
where $Q:=L/\mu$ and $\eta_0 :=\frac{\alpha_0 \mu }{1+\alpha_0\mu  }\theta +\frac{1}{1+\alpha_0\mu }.$
%In other words, the asymptotical convergence rate is $1-\frac{1}{\rho_0(\tau+1)}$, which obviously is an improved rate compared with $1-\frac{1}{ \tau+1}$ due to $\rho_0\in (0,1)$ if $\theta\in (0,1)$.
\end{theorem}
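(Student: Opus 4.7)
The PIAG method is the specialization of PLIAG obtained by choosing $\cQ=\RR^d$, $w(x)=\frac{1}{2}\|x\|^2$ (so the function $\ell$ of assumption B4 is the identity), and $\cI_k=\emptyset$. Under A0, A1, A2 the descent inequality of Lemma~\ref{lem2} is therefore available in its Euclidean form. The plan is to combine this inequality with the H\"{o}lderian growth assumption A3b applied at $x_{k+1}$, rearrange the resulting bound into the exact form required by Lemma~\ref{lem7}, and then use $d(x_0,\cX)\leq 1$ to turn the conclusion $V_k^{\rho}\leq a^k V_0$ of that lemma into the stated rate \eqref{resultad2}.

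\textbf{Setting up the recursion.} First I would apply Lemma~\ref{lem2} with the test point $x=P_\cX(x_k)$, the projection of $x_k$ onto $\cX$, using $\Phi(P_\cX(x_k))=\Phi^*$ and $d^2(x_{k+1},\cX)\leq \|P_\cX(x_k)-x_{k+1}\|^2$, to get
\[
\Phi(x_{k+1})-\Phi^* + \frac{1}{2\alpha}d^2(x_{k+1},\cX) \leq \frac{1}{2\alpha}d^2(x_k,\cX) - \frac{1}{2\alpha}\|x_{k+1}-x_k\|^2 + \frac{L(\tau+1)}{2}\sum_{j=k-\tau}^k \|x_{j+1}-x_j\|^2.
\]
Invoking A3b at $x_{k+1}$ to replace $\Phi(x_{k+1})-\Phi^*$ by $\frac{\mu}{2}d^{2\theta}(x_{k+1},\cX)$, multiplying by $2\alpha$, setting $V_k:=d^2(x_k,\cX)$ and $w_k:=\|x_{k+1}-x_k\|^2$, and dividing by $(1+\alpha\mu)$ produces
\[
\frac{\alpha\mu}{1+\alpha\mu}V_{k+1}^{\theta} + \frac{1}{1+\alpha\mu}V_{k+1} \leq \frac{1}{1+\alpha\mu}V_k - \frac{1}{1+\alpha\mu}w_k + \frac{\alpha L(\tau+1)}{1+\alpha\mu}\sum_{j=k-\tau}^k w_j,
\]
which matches Lemma~\ref{lem7} with $a=1/(1+\alpha\mu)$, $d=\alpha\mu/(1+\alpha\mu)$ (hence $a+d=1$), $b=a$, $c=\alpha L(\tau+1)a$, and $k_0=\tau$. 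The hypothesis $V_0\leq 1$ is forced by $d(x_0,\cX)\leq 1$, and the initialization $x_k=x_0$ for $k<0$ gives $w_k=0$ for $k<0$.

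\textbf{Concluding the two rates.} A short computation shows that \eqref{condition} reduces here to $(1+\alpha\mu)^{\tau+1}\leq 1+\frac{\mu}{(\tau+1)L}$, which is exactly $\alpha\leq \alpha_0$ by monotonicity in $\alpha$. Lemma~\ref{lem7} then yields $V_k^{\rho}\leq a^k V_0$ with $\rho=(1-a)\theta+a=\eta$. Since $\eta\leq 1$ we have $1/\eta\geq 1$, and $V_0\leq 1$ then implies $V_0^{1/\eta}\leq V_0$, so $V_k\leq a^{k/\eta}V_0$, which is \eqref{resultad2}. For the refined rate \eqref{asym2} at $\alpha=\alpha_0$, I would use the defining identity $(1+\alpha_0\mu)^{\tau+1}=1+\frac{1}{(\tau+1)Q}$ to rewrite $a_0^{1/\eta_0}=(1+u)^{-\beta}$ with $u=\frac{1}{(\tau+1)Q}$ and $\beta=\frac{1}{\eta_0(\tau+1)}$, and then apply the elementary inequality $(1+u)^{-\beta}\leq 1-\frac{\beta u}{1+u}$, valid for $u\geq 0$ and $\beta\in(0,1]$ because $(1+u)^{\beta}-1\geq \beta u(1+u)^{\beta-1}$.

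\textbf{Main obstacle.} The descent-lemma computation and the verification of \eqref{condition} are routine. The real work lies in the last step: identifying the constant $\rho_0$ in \eqref{asym2} (which one expects to equal $\eta_0$), confirming that $\eta_0(\tau+1)\geq 1$ so the exponent $\beta$ stays in $(0,1]$, and matching $1-\frac{\beta u}{1+u}$ with $\beta=\frac{1}{\eta_0(\tau+1)}$ and $u=\frac{1}{(\tau+1)Q}$ to the precise form $1-\frac{1}{(\tau+1)\rho_0+(\tau+1)^2\eta_0 Q}$. This final simplification requires a tight tracking of how the H\"{o}lder exponent $\theta$ percolates through $\rho$ and a careful use of Bernoulli-type inequalities, and is where I expect the bookkeeping to be heaviest.
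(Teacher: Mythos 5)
Your proposal follows the paper's proof essentially step for step: the same application of Lemma \ref{lem2} at the projection $\bar{x}_k$ combined with A3b, the same identification $a=b=\frac{1}{1+\alpha\mu}$, $d=\frac{\alpha\mu}{1+\alpha\mu}$, $c=\alpha(\tau+1)L\,a$, $k_0=\tau$ feeding into Lemma \ref{lem7}, the same use of $V_0\le 1$ to pass from $V_k^{\rho}\le a^kV_0$ to \eqref{resultad2}, and the same Bernoulli-type manipulation for \eqref{asym2} (which, as you anticipate, yields the stated rate with $\rho_0=\eta_0$). The approach is correct and coincides with the paper's.
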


Combining Theorems \ref{mainresult} with \ref{Thm6} yields the global convergence rate of the PIAG method.

\begin{corollary}
Suppose that assumptions A0, A1, A2, A3a, and A3b hold. If the step size $\alpha_k\equiv\alpha$ satisfies $0<\alpha\leq \alpha_0$, then the linear convergence rate of the PIAG method is $\frac{1}{1+\alpha\mu}$. Moreover, when an iteration point $x_{k_0}$ is such that $d(x_{k_0},\cX)\le 1$, the convergence rate of the truncated sequence $\{x_k\}_{k\ge k_0}$ is improved to $\left(\frac{1}{1+\alpha\mu}\right)^{\frac{1}{\eta}}$, where $\alpha_0>0$ and $\eta\in (0,1]$ are defined in Theorem \ref{Thm6}.
\end{corollary}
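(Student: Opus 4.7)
The plan is to derive the corollary as a direct combination of Theorem \ref{mainresult} (for the global rate $1/(1+\alpha\mu)$) and Theorem \ref{Thm6} (for the improved tail rate $(1/(1+\alpha\mu))^{1/\eta}$ once the iterates enter the unit ball around $\cX$). The PIAG method is the instance of PLIAG obtained by taking $w(x) = \tfrac{1}{2}\|x\|^2$, $\cI_k \equiv \emptyset$, and $\cQ = \RR^d$, so both theorems can be specialized.

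For the first claim, I would first verify that under this Euclidean choice of $w$, the standard assumptions A0--A3a imply the modified assumptions B1--B4. Assumption A1 gives B1 with the same constants $L_n$, since convex $L_n$-smoothness of $f_n$ is equivalent to $\tfrac{L_n}{2}\|\cdot\|^2 - f_n$ being convex; A2 coincides with B2; A3a coincides with B3 because $D_w(z,x) = \tfrac{1}{2}\|z-x\|^2$; and B4 holds with $\ell$ the identity function, via the telescoping bound $\|v_k-v_j\|^2 \le (k-j)\sum_{i=j}^{k-1}\|v_{i+1}-v_i\|^2$ obtained from Cauchy--Schwarz. Invoking Theorem \ref{mainresult} then yields $\Gamma_\alpha(x_k) \le (1+\alpha\mu)^{-k}\Gamma_\alpha(x_0)$, which combined with A3a gives $d^2(x_k,\cX) \le 2\alpha\Gamma_\alpha(x_0)(1+\alpha\mu)^{-k-1}$ and establishes the global rate $1/(1+\alpha\mu)$.

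For the second claim, I would re-run the proof of Theorem \ref{Thm6} on the tail $\{x_k\}_{k\ge k_0}$, treating $x_{k_0}$ as the new initial point. The Hölderian growth condition A3b holds globally and so in particular on the tail, with the same exponent $\theta$ and modulus $\mu$, while the hypothesis $d(x_{k_0},\cX) \le 1$ supplies the normalization $V_{k_0} \le 1$ needed to invoke Lemma \ref{lem7}. This reproduces exactly the recursion $dV_{k+1}^{\theta} + aV_{k+1} \le aV_k - bw_k + c\sum_{j=k-\tau}^{k}w_j$ with $a = 1/(1+\alpha\mu)$ and $d = 1-a$ on the shifted indices, and Lemma \ref{lem7} then produces the asymptotic decay at the improved rate $a^{1/\rho} = (1/(1+\alpha\mu))^{1/\eta}$.

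I expect the main obstacle to be the mismatch between the convention $x_j = x_0$ for $j<0$ used inside Theorem \ref{Thm6} and the actual past iterates near $k_0$. In the shifted setting this would amount to requiring $x_j = x_{k_0}$ for $j < k_0$, which is false in general; the true past increments $\|x_{j+1}-x_j\|$ for $k_0-\tau \le j < k_0$ enter the tail analysis through the delayed gradients $\nabla f_n(x_{k-\tau_k^n})$ during the first $\tau$ iterations after $k_0$. The resolution is that by Step~1 these past increments already decay like $(1+\alpha\mu)^{-j/2}$ up to a constant, so they can be absorbed into a finite multiplicative prefactor in front of the tail Lyapunov function without altering the geometric ratio. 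Equivalently, Lemma \ref{lem7} remains valid when the initial ``boundary'' contributions $w_{k_0-\tau},\ldots,w_{k_0-1}$ are merely bounded rather than zero, at the cost of a finite multiplicative constant that leaves the asymptotic rate $(1/(1+\alpha\mu))^{1/\eta}$ unchanged.
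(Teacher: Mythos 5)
Your proposal is correct and takes essentially the same route as the paper, whose entire proof is the one-line remark that the corollary follows by combining Theorem \ref{mainresult} (giving the global rate $\frac{1}{1+\alpha\mu}$ once A1, A2, A3a are translated into B1--B4 for $w=\frac{1}{2}\|\cdot\|^2$, exactly as you do) with Theorem \ref{Thm6} applied to the tail starting at $x_{k_0}$. The one point where you go beyond the paper is the boundary issue you flag --- after shifting the index origin to $k_0$, the increments $\|x_{j+1}-x_j\|$ for $k_0-\tau\le j<k_0$ are not zero, so the hypothesis $w_k=0$ for $k<0$ in Lemma \ref{lem7} is violated; this is a genuine subtlety the paper silently ignores, and your repair (these boundary terms are finitely many and geometrically small by the first part, so they only contribute a finite multiplicative constant and leave the asymptotic ratio $\left(\frac{1}{1+\alpha\mu}\right)^{1/\eta}$ unchanged) is the right way to close it.
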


%We note that the H\"{o}lderian growth condition were recently used to design faster subgradient methods in \cite{John2017fast,Yang2015R}. Naturally, it should be interesting to consider incremental aggregated subgradient methods under the H\"{o}lderian growth condition, which we leave as future work.

%We remark that as for the convergence rate of the PG method in \cite{Garrigos2017conv}, Theorem \ref{Thm6} shows that for the PIAG method, the linear convergence rate also depends on $\theta$. When $0<\theta<1$, we have locally improved the convergence rate of the PIAG method from $\frac{1}{1+\mu\alpha}$ to $\left(\frac{1}{1+\mu\alpha}\right)^{\frac{1}{\rho}}$.

%However, it remains unclear whether one can get locally improved rates in function values under the  H\"{o}lderian growth condition. We will consider it for future work.

\section{Concluding remarks}\label{se6}

In this paper, we proposed a unified algorithmic framework for minimizing the sum of a convex function consisting of additive relatively smooth convex component functions and a possibly non-smooth convex regularization function, over an abstract feasible set. Our proposed algorithm includes the IAG, IAP, PG, and PIAG methods as special cases, and it also includes some novel interesting schemes such as the non-Euclidean IAP and PIAG methods. To the best of our knowledge, we established the first sublinear convergence result for incremental aggregated type first order methods with positive delayed parameters.  Moreover, by introducing the Bregman distance growth condition and employing the Lipschitz-like/convexity condition, we obtained a group of linear convergence results for the unified algorithm. Many convergence results are new even when specialized to the existing algorithms. The key idea behind our proof is to construct a certain Lyapunov function by embedding the Bregman distance growth condition into a descent-type lemma.

Although in Appendix B we gave a nontrivial example such that assumptions B1-B3 hold but assumptions A1 and A3 fail, it is more interesting to find some practical examples satisfying the modified assumptions B1-B4 simultaneously but do not falling into the more restrictive assumptions A1-A3. More generally, it is interesting to study how much the modified assumptions, which are related to each other with the same function $w$, are weaker than the standard assumptions. Moreover, it is also interesting to relax the convexity requirement in the PLIAG framework to develop corresponding linear convergence theory for nonconvex optimization as done in the recent publications \cite{Bauschke2019On,Peng2018non} for special cases of the PLIAG method for solving nonconvex optimization. These are beyond the scope of this paper and we leave the related work for future research.

Finally, we remark that the proof technique developed in this paper will find more applications in other types of incremental aggregated methods, including randomized and accelerated versions of the PIAG method. Indeed,  our proof technique has  been modified to analyze linear convergence of nonconvex and inertial PIAG methods \cite{Peng2018non,zhang2017prox}. However, it should be noted that it is not trivial to extend our proof technique to analyze the inertial version of the PLIAG method:
\begin{align*}
& z_{k+1}=\arg\min_{x\in \RR^d}\Big\{h(x)+ \sum_{i\in \cI_k} f_{i}(x)+ \Big\langle \sum_{j\in \cJ_k}\nabla f_j(x_{k-\tau_k^j}),x \Big\rangle +\frac{1}{\alpha_k}D_w(x,x_k)\Big\}, \\
& x_{k+1}=z_{k+1}+\eta_k \cdot (z_{k+1}-z_k),
\end{align*}
where $\eta_k$ is a positive inertial parameter. The potential difficulty is that some inequalities for the Euclidean distance fail to hold for the Bregman distance. The convergence analysis of the inertial PLIAG method is beyond the scope of this paper and we leave it for future research as well.

\section*{Acknowledgements}

%The third author (Lei Guo) is the corresponding author.
The authors thank the area editor, the associate editor, and the two anonymous
referees for their helpful comments and constructive suggestions. The authors are also grateful to Prof. Yair Censor for bringing his early work to their attention.
% I suggest to add the Grants information later on.
%The work of H. Zhang is supported by NSFC Grants (No.61601488, No. 61571008). The work of Y.H. Dai is supported by the Key Project of Chinese National Programs for Fundamental Research and Development Grant 2015CB856002, and NSFC Grants (No.11631013, No.11331012, No.71331001). The work
%of L. Guo is supported by NSFC Grants (No.11771287, No.11401379, No.71632007) and the Fundamental Research Funds for the Central Universities.

%\bibliographystyle{plain}
%\bibliography{mor}
%
%
%\appendix

 \setcounter{section}{0}
\renewcommand\thesection{\Alph{section}}

\begin{appendices}

\section{Examples satisfying quadratic growth condition}

We introduce two examples whose objective function fails to be strongly convex but satisfies the quadratic growth condition. Then our derived results in Section \ref{se4} show that the PIAG method is a suitable algorithm for solving them; see the discussions after Theorem \ref{mainresult}. More examples can be found in \cite{Drusvyatskiy2016Error}.
\begin{example}\cite[Lemma 10]{Bolte2015From}
Consider the least squares optimization with $\ell_1$-norm regularization:
 $$\min_{x\in \RR^d}\quad \Phi(x):= \frac{1}{2}\|Ax-b\|^2+\lambda \|x\|_1,$$
where $\lambda>0$ is a regularization parameter, $A$ and $b$ are appropriate matrix and vector respectively. Let $R>\frac{\|b\|^2}{2\lambda}$. Then, there exists a constant $\mu>0$ such that
 \begin{equation}\label{ex}
 \Phi(x)-\Phi^*\geq \mu d^2(x,\cX),~~\forall x\in \Omega=\{x\in\RR^d: \|x\|_1\leq R\}.
 \end{equation}
 Let $w(x)=\frac{1}{2}\|x\|^2,\ \cQ=\RR^d$,  $F(x)=\frac{1}{2}\|Ax-b\|^2$, and $h(x)=\|x\|_1+\delta_\Omega(x)$. In this setting, the quadratic growth condition is actually \eqref{ex}.
\end{example}

\begin{example}
The following is a dual model appeared in compressed sensing
$$\min_{x\in \RR^d}\quad F(x):= -b^Tx+\frac{\alpha}{2}\| \textrm{shrink}_\mu(A^Tx)\|^2.$$
Here the parameter $\alpha$, matrix $A$, and vector $b$ are given, and the operator ``shrink" is defined by
 $$\textrm{shrink}_\mu(s)=\textrm{sign}(s)\max\{|s|-\mu,0\},$$
where  $\textrm{sign}(\cdot), |\cdot|$, and $\max\{\cdot, \cdot\}$ are component-wise operations for vectors. Note that the smooth part $F$ in the problem above can be written into the sum of many component functions.

The objective function $F$ was shown in \cite{Lai2013Aug} to be restricted strongly convex on $\RR^d$, and hence satisfies the quadratic growth condition with $\mathcal{Q}=\RR^d$ due to their equivalence \cite{Zhang2015The,zhang2016new}.
\end{example}

\section{Discussions on Bregman distance growth condition}\label{secB}

We investigate sufficient conditions for Bregman distance growth condition to hold and give a class of functions satisfying this growth condition.

\begin{theorem}\label{thm-suffi}
The Bregman distance growth condition holds under any one of the two conditions:
\begin{itemize}

  \item[(C1).] $\Phi$ satisfies the quadratic growth condition and $\nabla w$ is Lipschitz continuous  on $\dom w$.

  \item[(C2).] $h$ is convex,  $\dom h\cap \rint\dom w \not= \emptyset$, and $F$ satisfies
\begin{equation}\label{scnew}
F(y)\geq F(x)+\langle \nabla F(x),y-x\rangle +\mu \cdot D_w(x,y),~~\forall x, y\in \rint\dom{w}.
\end{equation}
\end{itemize}
\end{theorem}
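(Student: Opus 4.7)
\textbf{Proof plan for Theorem~\ref{thm-suffi}.}

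\emph{Part (C1).} The plan is to upper-bound $\inf_{z\in\cX}D_w(z,y)$ by a multiple of $d^2(y,\cX)$ and then invoke the quadratic growth of $\Phi$. Since $\nabla w$ is $L_w$-Lipschitz on $\dom w$, the standard descent lemma for $w$ yields $D_w(z,y)=w(z)-w(y)-\langle\nabla w(y),z-y\rangle\le \tfrac{L_w}{2}\|z-y\|^2$ for every $z\in\dom w$ and $y\in\rint\dom w$. Taking infimum over $z\in\cX$ (with a density argument if a closest point of $\cX$ to $y$ sits on $\partial\dom w$) gives $\inf_{z\in\cX}D_w(z,y)\le \tfrac{L_w}{2}d^2(y,\cX)$. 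Combining this with the quadratic growth $\Phi(y)-\Phi^*\ge \tfrac{\nu}{2}d^2(y,\cX)$ delivers B3 with constant $\mu=\nu/L_w$.

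\emph{Part (C2).} Fix any $y\in\rint\dom w$ and pick an optimal solution $z^*\in\cX\cap\rint\dom w$, a point which exists under the implicit nondegeneracy embedded in B3. The plan is to combine the relative strong convexity \eqref{scnew} of $F$ at $z^*$ with the subgradient inequality for $h$ coming from the first-order optimality condition at $z^*$, so that the gradient-linear terms cancel. Specifically, applying \eqref{scnew} with $x=z^*$ gives $F(y)\ge F(z^*)+\langle\nabla F(z^*),y-z^*\rangle+\mu D_w(z^*,y)$. Since $z^*\in\rint\dom w\subseteq\rint\cQ$, the normal cone to $\cQ$ at $z^*$ vanishes and the first-order optimality condition for $z^*$ minimizing $\Phi=F+h$ on $\cQ$ reads $-\nabla F(z^*)\in\partial h(z^*)$; convexity of $h$ then yields $h(y)\ge h(z^*)-\langle\nabla F(z^*),y-z^*\rangle$. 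Summing the two inequalities, the inner products cancel and we obtain $\Phi(y)\ge \Phi^*+\mu D_w(z^*,y)$, which immediately implies B3.

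\emph{Main obstacle.} The algebra is short in both parts; the real technicality is justifying that some optimal solution $z^*$ lies in $\rint\dom w$ for (C2), so that $\nabla F(z^*)$ is defined and the optimality condition is free of a normal-cone term, and analogously for (C1) that the quadratic upper bound on $D_w$ remains valid at the closest points of $\cX$ to $y$. Both hinge on the implicit assumption (noted in the paper right after B3) that $\cX\cap\dom w\neq\emptyset$, together with standard continuity/density arguments near $\partial\dom w$. Once these interpretational points are in place, each direction collapses to a two-line manipulation.
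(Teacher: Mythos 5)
Your proposal is correct and follows essentially the same route as the paper: for (C1) you bound $\inf_{z\in\cX}D_w(z,y)$ by $\tfrac{L_w}{2}d^2(y,\cX)$ via the descent lemma for $w$ and then invoke quadratic growth, and for (C2) you add the relative strong convexity inequality at an optimal point to the subgradient inequality for $h$ obtained from first-order optimality so that the linear terms cancel. The only cosmetic difference is that the paper phrases the optimality condition via Fermat's rule as the existence of $\bar v\in\partial h(\bar x)$ with $\langle\nabla F(\bar x)+\bar v,\,y-\bar x\rangle\ge 0$ rather than asserting $-\nabla F(z^*)\in\partial h(z^*)$ outright, and both arguments share the same implicit requirement that the chosen optimal point lies where $\nabla F$ is defined.
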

\begin{proof}

Assume that (C1) holds. Let $\bar{y}$ stand for the projection of $y$ onto $\cX$. It then follows from the quadratic growth condition of $\Phi$ that there exists $\mu>0$ such that
$$\Phi(y)-\Phi^* \geq   \frac{\mu}{2}d^2(y,\cX) =\frac{\mu}{2}\|y-\bar{y}\|^2,\quad \forall y\in \dom w.
$$
Since $\nabla w$ is Lipschitz continuous, it is easy to verify that there exists $L>0$ such that
$$\frac{\mu}{2}\|y-\bar{y}\|^2\geq \frac{\mu}{L}  D_w(\bar{y}, y)\le \frac{\mu}{L} \inf_{z\in \cX} D_w(z, y),\quad \forall y\in \rint\dom w.$$
Thus, the desired result follows by combining the last two inequalities immediately.

Assume that (C2) holds. The convexity of $h$ together with \eqref{scnew} implies that for any $v\in \partial h (x)$,
\begin{equation}\label{app-ine}
F(y) +h(y)\geq F(x)+h(x)+\langle \nabla F(x)+v,y-x\rangle +\mu \cdot D_w(x,y), ~~\forall x, y\in \rint\dom{w}.
\end{equation}
Pick up an optimal solution $\bar{x}\in\cX$. Using Fermat's rule and the assumption that $ \dom h\cap \rint\dom w \not= \emptyset$, it follows that there exists $\bar{v}\in \partial h(\bar{x})$ such that
\[
\langle \nabla F(\bar{x})+\bar{v}, y-\bar{x}\rangle \ge 0,\quad \forall y\in \dom w.
\]
Recalling that $F(\bar{x})+h(\bar{x})=\Phi^*$, substituting $x=\bar{x}$ and $v=\bar{v}$ in \eqref{app-ine} indicates
$$
\Phi(y)-\Phi^*\geq \mu \cdot D_w(\bar{x},y) \geq \mu\cdot  \inf_{z\in \cX}D_w(z,y),~~\forall y\in \rint\dom{w},
$$
which is exactly the Bregman distance growth condition.
\end{proof}

It should be pointed out that condition \eqref{scnew} is different from the $\mu$-strong convexity relative to $w$ that was introduced in \cite{Lu2016relatively}:
$$F(y)\geq F(x)+\langle \nabla F(x),y-x\rangle +\mu \cdot D_w(y,x), ~~~\forall x, y\in \rint\dom{w}.$$
These two conditions are different due to the possible nonsymmetry of the Bregman distance. To link them, we need a measure for the lack of symmetry in $D_w$ introduced in \cite{Bauschke2016A}.
\begin{definition}\label{def1}
Given a Legendre function $w:\RR^d\rightarrow (-\infty,\infty]$, its symmetry coefficient is defined by
$$\alpha(w):=\inf \left\{\frac{D_w(x,y)}{D_w(y,x)}: x, y\in \rint\dom{w}, x\neq y\right\}\in [0,1].$$
\end{definition}
If $a(w)\neq 0$, then by the definition of the symmetry coefficient, it is not hard to see that
$$\alpha(w)D_w(y,x)\leq D_w(x,y) \leq \alpha(w)^{-1}D_w(y,x), ~~\forall x, y\in \rint\dom{w},$$
which implies that condition \eqref{scnew} and the relatively strong convexity are equivalent up to a constant.

We now revisit a class of convex functions studied in \cite{Lu2016relatively}:
$$
F(x)= \frac{1}{4}\|Ex\|_2^4+\frac{1}{4}\|Ax-b\|_4^4+\frac{1}{2}\|Cx-d\|_2^2,
$$
where $E,A,C$ and $b,d$ are appropriate matrices and vectors respectively, $\|\cdot\|_p$ stands for the $\ell_p$ norm for $p>1$. If there is no confusion, we  use $\|\cdot\|$ to replace $\|\cdot\|_2$ for simplicity. It is not hard to see that $F$ is not strongly convex and $\nabla F$ is not Lipschitz continuous. Actually, by assuming that the smallest singular values of $E$ and $C$, denoted by $\sigma_E$ and $\sigma_C$ respectively, are positive, it has been shown that $F(x)$ is $L$-smooth (and hence satisfies the Lipschitz-like/convexity condition) and $\mu$-strongly convex relative to
\begin{equation}\label{w}
w(x)=\frac{1}{4}\|x\|^4+\frac{1}{2}\|x\|^2
\end{equation}
with $L:=3\|E\|^4+3\|A\|^4+6\|A\|^3\|b\|_2+3\|A\|^2\|b\|^2+\|C\|^2$ and $\mu:=\min\{\frac{\sigma_E^4}{3},\sigma_C^2\}$, where this function in \eqref{w} is also used in \cite{bolte2017first} to show the Lipschitz-like/convexity of a nonconvex function. In Proposition \ref{coposi} below, we show that the symmetry coefficient of $w$ is positive, where $w$ is defined in \eqref{w}. Thus, based on Theorem \ref{thm-suffi}, a class of functions satisfying the Bregman distance growth condition can be constructed in the following form:
$$\Phi(x)=\frac{1}{4}\|Ex\|_2^4+\frac{1}{4}\|Ax-b\|_4^4+\frac{1}{2}\|Cx-d\|_2^2 + h(x).$$
Thus, when applying the PLIAG method with $\tau=0$ to solve the minimization problem $``\min_{x\in \RR^d} \Phi(x)"$, the linear convergence is available from Theorem \ref{mainresult}.

\begin{proposition}\label{coposi}
Let $w(x)=\frac{1}{4}\|x\|^4 + \frac{1}{2}\|x\|^2$. Then $\alpha(w)\geq \frac{1}{5}$. More generally, let $\tilde{w}(x)=\frac{\beta}{4}\|x\|^4 + \frac{\gamma}{2}\|x\|^2$ with $\beta, \gamma >0$, then $\alpha(\tilde{w})\geq \frac{1}{5}\frac{\min(\beta, \gamma)}{\max(\beta, \gamma)}$.
\end{proposition}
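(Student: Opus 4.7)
The plan is to exploit the linear additivity of the Bregman distance in order to reduce the asymmetry question to a single polynomial inequality. Set $\phi(x):=\|x\|^4$ and $\psi(x):=\|x\|^2$, so that $w=\tfrac14\phi+\tfrac12\psi$ and, by linear additivity,
\[
D_w(x,y)=\tfrac14 D_\phi(x,y)+\tfrac12\|x-y\|^2.
\]
Since $D_\psi(x,y)=\|x-y\|^2$ is symmetric in its arguments, the asymmetry of $D_w$ is entirely inherited from $D_\phi$. Therefore it suffices to prove the pointwise inequality
\[
5\,D_\phi(x,y)\;\ge\;D_\phi(y,x)\qquad\forall\,x,y\in\RR^d, \qquad(\star)
\]
because then the symmetric quadratic contribution trivially satisfies the ratio bound $1\ge 1/5$, and the termwise comparison gives $5 D_w(x,y)\ge D_w(y,x)$.

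For $(\star)$, I would use $\nabla\phi(x)=4\|x\|^2 x$ to compute $D_\phi(x,y)=\|x\|^4+3\|y\|^4-4\|y\|^2\langle x,y\rangle$. With $r:=\|x\|$ and $s:=\|y\|$, a direct subtraction gives
\[
5 D_\phi(x,y)-D_\phi(y,x) \;=\; 2\bigl[\,r^4+7s^4+2(r^2-5s^2)\langle x,y\rangle\bigr].
\]
Cauchy-Schwarz, $|\langle x,y\rangle|\le rs$, bounds the bilinear term from below by $-2|r^2-5s^2|\,rs$, so it is enough to verify $r^4+7s^4\ge 2|r^2-5s^2|\,rs$. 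Setting $u:=r/s\ge 0$ (the case $s=0$ is immediate) reduces this to two univariate inequalities according to the sign of $r^2-5s^2$. The crux is then the factorizations
\[
u^4-2u^3+10u+7=(u+1)^2(u^2-4u+7),\qquad u^4+2u^3-10u+7=(u-1)^2(u^2+4u+7),
\]
in each of which the quadratic factor has discriminant $-12<0$ and is therefore strictly positive. This establishes $(\star)$, hence $\alpha(w)\ge 1/5$.

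The general claim follows from a sandwich argument. Writing $\tilde w=\beta\cdot\tfrac14\phi+\gamma\cdot\tfrac12\psi$ and applying linear additivity, a termwise comparison of coefficients yields
\[
\min(\beta,\gamma)\,D_w(u,v)\;\le\;D_{\tilde w}(u,v)\;\le\;\max(\beta,\gamma)\,D_w(u,v),
\]
valid for any ordered pair $(u,v)$. Forming the ratio $D_{\tilde w}(x,y)/D_{\tilde w}(y,x)$ and invoking the already established bound $D_w(x,y)\ge\tfrac15 D_w(y,x)$ produces $\alpha(\tilde w)\ge \tfrac{1}{5}\cdot\min(\beta,\gamma)/\max(\beta,\gamma)$.

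The main obstacle I expect is simply spotting the polynomial factorizations. Once one notices that equality in $(\star)$ is approached along the rays $y=-\lambda x$ and $x=-\lambda y$ as $\lambda\downarrow 0$, it is natural to guess double roots at $u=\pm 1$, after which the factoring is mechanical and the remaining bookkeeping is routine.
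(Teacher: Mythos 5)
Your proof is correct, and it takes a genuinely different route from the paper's. Both arguments start from linear additivity, $D_w=\tfrac14 D_{\|\cdot\|^4}+\tfrac12\|x-y\|^2$, but the paper then keeps the two pieces coupled: it rewrites numerator and denominator via the identity $\tfrac14 D_{\|\cdot\|^4}(x,y)=\tfrac14(\|x\|^2-\|y\|^2)^2+\tfrac12\|y\|^2\|x-y\|^2$, reduces the infimum to $1-\sup(\cdot)$ over the region $\|y\|<\|x\|$, and bounds the resulting expression $R(x,y)\ge\tfrac54$ by the substitution $\|x\|^2=\|y\|^2+\eta^2$ combined with triangle-inequality and AM--GM estimates. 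You instead decouple completely: you prove the pointwise bound $5D_\phi(x,y)\ge D_\phi(y,x)$ for the quartic kernel alone and note that adding the symmetric quadratic term can only help, so a termwise comparison finishes the job. I verified your computation $5D_\phi(x,y)-D_\phi(y,x)=2[r^4+7s^4+2(r^2-5s^2)\langle x,y\rangle]$, the Cauchy--Schwarz reduction to $u^4+7\ge 2|u^2-5|u$, and both factorizations $(u+1)^2(u^2-4u+7)$ and $(u-1)^2(u^2+4u+7)$ with negative discriminants of the quadratic factors; all are correct, and the $s=0$ case is handled. What your route buys is a fully mechanical, checkable argument plus the slightly stronger by-product that the pure quartic $\tfrac14\|\cdot\|^4$ already has symmetry coefficient at least $\tfrac15$, which the paper's coupled estimate does not directly yield. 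The sandwich argument for general $\beta,\gamma$ is the same as the paper's. One small inaccuracy in your closing motivational remark: equality in $(\star)$ is not approached along $y=-\lambda x$ as $\lambda\downarrow 0$ (there the ratio for $D_\phi$ tends to $\tfrac13$); the double roots of your quartics sit at $u=\pm1$, with $u=-1$ outside the admissible range $u=\|x\|/\|y\|\ge 0$ and $u=1$ corresponding to the degenerate limit $x=y$. This does not affect the validity of the proof.
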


\begin{proof}
Let $w_1(x):=\frac{1}{4}\|x\|^4$ and $w_2(x):=\frac{1}{2}\|x\|^2$.  Then $w(x)=w_1(x)+w_2(x)$.
By the linear additivity of the Bregman distance, it follows that
$$D_w(x,y)=D_{w_1+w_2}(x,y)=D_{w_1}(x,y)+D_{w_2}(x,y).$$
After some simple calculus, we get $D_{w_1}(x,y)=\frac{1}{4}\|x\|^4+\frac{3}{4}\|y\|^4-\|y\|^2\langle x,y\rangle$ and $D_{w_2}(x,y)=\frac{1}{2}\|x-y\|^2$. It follows from the definition of the symmetry coefficient that
\begin{align*}
\alpha(w)=&\inf_{x\neq y}\frac{\frac{1}{4}\|x\|^4+\frac{3}{4}\|y\|^4-\|y\|^2\langle x,y\rangle+\frac{1}{2}\|x-y\|^2}{\frac{1}{4}\|y\|^4+\frac{3}{4}\|x\|^4-\|x\|^2\langle x,y\rangle+\frac{1}{2}\|x-y\|^2}\\
=&\inf_{x\neq y}\frac{\frac{1}{4}(\|x\|^2-\|y\|^2)^2+ \frac{1}{2}\|x-y\|^2(1+\|y\|^2)}{\frac{1}{4}(\|x\|^2-\|y\|^2)^2+ \frac{1}{2}\|x-y\|^2(1+\|x\|^2)}\\
=&\inf_{\|y\|<\|x\|}\frac{\frac{1}{4}(\|x\|^2-\|y\|^2)^2+ \frac{1}{2}\|x-y\|^2(1+\|x\|^2) +\frac{1}{2}\|x-y\|^2(\|y\|^2-\|x\|^2)}{\frac{1}{4}(\|x\|^2-\|y\|^2)^2+ \frac{1}{2}\|x-y\|^2(1+\|x\|^2)}\\
=& 1-\sup_{\|y\|<\|x\|} \frac{\frac{1}{2}\|x-y\|^2(\|x\|^2-\|y\|^2)}{\frac{1}{4}(\|x\|^2-\|y\|^2)^2+ \frac{1}{2}\|x-y\|^2(1+\|x\|^2)}\\
=& 1-\sup_{\|y\|<\|x\|}\left(\frac{1}{2}\frac{\|x\|^2-\|y\|^2}{\|x-y\|^2}+\frac{1+\|x\|^2}{\|x\|^2-\|y\|^2}\right)^{-1},
\end{align*}
where the third equality follows from the observation that the minimization problem attains  an optimal solution when $\|y\|< \|x\|$. Denote
\[
R(x,y):= \frac{1}{2}\frac{\|x\|^2-\|y\|^2}{\|x-y\|^2}+\frac{1+\|x\|^2}{\|x\|^2-\|y\|^2}.
 \]
It suffices to show that $R(x,y)\geq \frac{5}{4}$ under the constraint $\|y\|<\|x\|$. We can find $\eta>0$ such that $\|x\|^2=\|y\|^2+\eta^2$. Then, $\|x\|^2\leq (\|y\|+\eta)^2$ and hence $\|x\|\leq \|y\|+\eta$, which implies $\|x-y\|\leq \|x\|+\|y\|\leq 2\|y\|+\eta.$ Therefore, by letting $\mu:= \frac{\|y\|}{\eta}$ we have
\begin{align*}
R(x,y)\geq &\frac{1}{2}\frac{\eta^2}{(2\|y\|+\eta)^2}+\frac{1+\|y\|^2+\eta^2}{\eta^2}\\
\geq &\inf_{\mu\ge0,\eta\ge0}\left( \frac{1}{2}\frac{1}{(2\mu+1)^2}+1+\mu^2+\frac{1}{\eta^2}\right)\\
\geq &\inf_{\mu\ge0,\eta\ge0}\left(  \frac{1}{4}\big(\frac{1}{4\mu^2+1}+ 4\mu^2+1\big)+\frac{3}{4}+\frac{1}{\eta^2}\right)\\
\geq & \inf_{\mu\ge0,\eta\ge0}\left( \frac{1}{2}+\frac{3}{4}+\frac{1}{\eta^2}\right)\geq \frac{5}{4}.
\end{align*}
  For the general case, we derive that
\begin{align*}
\alpha(\tilde{w})=&\inf_{x\neq y} \frac{\beta D_{w_1}(x,y)+\gamma D_{w_2}(x,y)}{\beta D_{w_1}(y,x)+\gamma D_{w_2}(y,x)}\\
\geq & \frac{\min(\beta, \gamma)}{\max(\beta, \gamma)} \inf_{x\neq y} \frac{D_{w_1}(x,y)+  D_{w_2}(x,y)}{ D_{w_1}(y,x)+ D_{w_2}(y,x)}\\
= & \frac{\min(\beta, \gamma)}{\max(\beta, \gamma)} \inf_{x\neq y} \frac{D_{w}(x,y)}{D_{w}(y,x)}\\
= & \frac{\min(\beta, \gamma)}{\max(\beta, \gamma)}\alpha(w).
\end{align*}
The proof is complete by noting that $\alpha(w)\ge \frac{1}{5}$.
\end{proof}

We remark that the function $\tilde{w}(x)=\frac{\beta}{4}\|x\|^4 + \frac{\gamma}{2}\|x\|^2$ with $\beta, \gamma >0$  was used for non-Euclidean first order methods to
solve a class of low-rank minimization problems; see, e.g., \cite{Dragomir2019Quartic} where the function $\tilde{w}(x)$ is called quartic universal kernel.
It is easy to verify that in the matrix case, Proposition \ref{coposi} is also true. This might help us design more aggressive step size strategies for low rank minimization problems discussed in \cite{Dragomir2019Quartic,Li2019Provable}.

\section{Proofs of the propositions}

\noindent{\bf Proof of Proposition \ref{pro1}.}
We only give the proof for the case when $\tau>0$ since the proof for the case when $\tau=0$ follows similarly.

First we observe that the objective function $\Phi_k$ in \eqref{frame1} is strictly convex. Thus this problem has at most one minimizer. Let $x_j\in \rint\dom{w}$ for any $j\le k$ and $x\in \rint\dom{w}$. Then by assumptions B1 and B4, it follows that
\begin{align}\label{C.1}
f_j(x)\leq &f_j(x_{k-\tau_k^j})+\langle \nabla f_j(x_{k-\tau_k^j}), x-x_{k-\tau_k^j}\rangle + L_j\cdot D_w(x,x_{k-\tau_k^j}) \nonumber\\
\leq  &f_j(x_{k-\tau_k^j})+\langle \nabla f_j(x_{k-\tau_k^j}), x-x_{k-\tau_k^j}\rangle +\ell(2)L_j\cdot D_w(x,x_k)+\ell(2)L_j\cdot D_w(x_k,x_{k-\tau_k^j}).
\end{align}
Summing \eqref{C.1} over all $j\in J_k$ and noting that $\alpha_k\leq \frac{1}{\ell(2)\sum_{j\in J_k}L_j}$, we have
\begin{align*}
\sum_{j\in \cJ_k}f_j(x)\leq & C_k+ \langle \sum_{j\in \cJ_k} \nabla f_j(x_{k-\tau_k^j}), x\rangle  + \ell(2)\sum_{j\in \cJ_k} L_j\cdot D_w(x,x_k)\nonumber\\
\leq &C_k+ \langle \sum_{j\in \cJ_k} \nabla f_j(x_{k-\tau_k^j}), x\rangle  + \frac{1}{\alpha_k}\cdot D_w(x,x_k),
\end{align*}
where $$C_k:=\sum_{j\in \cJ_k} f_j(x_{k-\tau_k^j})- \langle \sum_{j\in \cJ_k} \nabla f_j(x_{k-\tau_k^j}), x_{k-\tau_k^j}\rangle + \ell(2)\sum_{j\in \cJ_k} L_j\cdot D_w(x_k,x_{k-\tau_k^j}).$$
This above inequality leads to the following relationship between $\Phi$ and $\Phi_k$:
\begin{equation}\label{big}
\Phi(x)\leq \Phi_k(x)+C_k,\quad \forall x\in \rint\dom{w}.
\end{equation}
Due to the nonemptiness and compactness of the optimal solution set $\cX$, we have that  $\Phi+\delta_\mathcal{Q}$ is level-bounded; see, e.g., \cite[Corollary 8.7.1]{Rock1970convex}.  Then by \eqref{big}, we have that $\Phi_k+\delta_\cQ$ is also level-bounded. Thus the optimal solution set of minimizing $\Phi_k$ over $\cQ$ is nonempty by Weierstrass's theorem. Therefore problem \eqref{frame1} has only one solution, say $x_{k+1}$. The fact that $x_{k+1}\in \rint\dom{w}$  can be seen from the optimality of $x_{k+1}$ and the fact that $\partial w(z)=\emptyset$ for any $z\notin \rint\dom{w}$ (see \cite[Theorem 26.1]{Rock1970convex}).

\bigskip

\noindent{\bf Proof of Lemma \ref{lem2}.}
Since $f_j(x)$ is convex and $L_j$-smooth relative to $w$, it follows that
\begin{align}
f_j(x_{k+1})\leq &f_j(x_{k-\tau_k^j})+\langle \nabla f_j(x_{k-\tau_k^j}), x_{k+1}-x_{k-\tau_k^j}\rangle + L_j\cdot D_w(x_{k+1},x_{k-\tau_k^j}) \nonumber\\
\leq  &f_j(x)+\langle \nabla f_j(x_{k-\tau_k^j}), x_{k+1}-x\rangle + L_j\cdot D_w(x_{k+1},x_{k-\tau_k^j}). \label{Lp}
\end{align}
For simplicity, we denote
$$s_k:=\sum_{j\in\cJ_k}\nabla f_j(x_{k-\tau_k^j}).$$
Summing \eqref{Lp} over all $j\in \cJ_k$ and using the definition of $s_k$ yield
\begin{equation}\label{m1}
\sum_{j\in\cJ_k}f_j(x_{k+1})\leq \sum_{j\in\cJ_k}f_j(x)+\langle s_k, x_{k+1}-x\rangle +\sum_{j\in\cJ_k} L_j\cdot D_w(x_{k+1},x_{k-\tau_k^j}).
\end{equation}
By the optimality of $x_{k+1}$ and Fermat's rule, we have
\begin{equation}\label{m01}
-s_k-\frac{1}{\alpha}(\nabla w(x_{k+1})-\nabla w(x_k)) \in \partial h(x_{k+1})+ \sum_{i\in\cI_k}\nabla f_i(x_{k+1}).
\end{equation}
Using the subgradient inequality for the convex function $h(x)+\sum_{i\in\cI_k} f_i(x)$ at $x_{k+1}$ implies
\begin{align}
& h(x_{k+1})+\sum_{i\in\cI_k}\nabla f_i(x_{k+1})\nonumber \\
\leq & h(x)+\sum_{i\in\cI_k}\nabla f_i(x)+ \langle s_k+\frac{1}{\alpha}(\nabla w(x_{k+1})-\nabla w(x_k)),x- x_{k+1} \rangle \nonumber\\
= & h(x)+\sum_{i\in\cI_k}\nabla f_i(x) +\langle s_k ,x- x_{k+1}\rangle +\frac{1}{\alpha}\langle \nabla w(x_{k+1})-\nabla w(x_k),x- x_{k+1} \rangle \nonumber\\
=& h(x)+\sum_{i\in\cI_k}\nabla f_i(x) +\langle s_k ,x- x_{k+1}\rangle +\frac{1}{\alpha}D_w(x,x_k)-\frac{1}{\alpha}D_w(x,x_{k+1})-\frac{1}{\alpha}D_w(x_{k+1},x_k)\label{p2},
\end{align}
where the last equality follows from the three-point identity of the Bregman distance.
Adding \eqref{p2} to \eqref{m1} implies
\begin{equation}\label{pr}
\Phi(x_{k+1})\leq \Phi(x)+\frac{1}{\alpha}D_w(x,x_k)-\frac{1}{\alpha}D_w(x,x_{k+1})-\frac{1}{\alpha}D_w(x_{k+1},x_k)+\sum_{j\in\cJ_k} L_j\cdot D_w(x_{k+1},x_{k-\tau_k^j}).
\end{equation}
Recalling that $\tau_k^j$ is bounded above by $\tau$ and using assumption B4, it follows that
$$D_w(x_{k+1},x_{k-\tau_k^j})\leq \ell(\tau_k^j+1)\sum_{j=k-\tau_k^j}^k D_w(x_{j+1},x_j)\leq \ell(\tau+1)\sum_{j=k-\tau}^kD_w(x_{j+1},x_j).$$
Summing the above inequality over $j\in\cJ_k$ implies
\begin{align*}
\sum_{j\in\cJ_k} L_j\cdot D_w(x_{k+1},x_{k-\tau_k^j})\leq & \ell(\tau+1) \sum_{j\in\cJ_k} L_j\sum_{j=k-\tau}^kD_w(x_{j+1},x_j)\nonumber\\
&\leq L \cdot\ell(\tau+1)\sum_{j=k-\tau}^kD_w(x_{j+1},x_j).
\end{align*}
This together with \eqref{pr} leads to the desired result.

\bigskip

\noindent{\bf Proof of Theorem \ref{mainresult0}.} First by Proposition \ref{pro1} and the choice of the step size \eqref{step1}, the generated sequence is well-defined. The rest of the proof will rely on the following formulations:
\begin{align}
\sum_{j=k-\tau}^k a_j = & a_{k-\tau}+\sum_{i=2}^{\tau+1} a_{k-\tau+i-1} \nonumber\\
= &   a_{k-\tau}+\sum_{i=1}^{\tau} a_{(k+1)-\tau+i-1}\nonumber\\
= & a_{k-\tau}+\sum_{i=1}^{\tau} (i+1) a_{k-\tau+(i+1)-1}- \sum_{i=1}^{\tau} i a_{(k+1)-\tau+i-1}\nonumber\\
= & a_{k-\tau}+\sum_{i=2}^{\tau+1} i a_{k-\tau+i-1}- \sum_{i=1}^{\tau} i a_{(k+1)-\tau+i-1}\nonumber\\
= & \sum_{i=1}^{\tau+1} i a_{k-\tau+i-1}- \sum_{i=1}^{\tau} i a_{(k+1)-\tau+i-1} \label{formula1}\\
= & (\tau+1)a_k+ \sum_{i=1}^{\tau} i a_{k-\tau+i-1}- \sum_{i=1}^{\tau} i a_{(k+1)-\tau+i-1}\label{formula2}.
\end{align}
Letting $a_j:=D_w(x_{j+1},x_j)$ and using formulation \eqref{formula2}, we have
\begin{eqnarray}
\sum_{j=k-\tau}^k D_w(x_{j+1},x_j)&=& (\tau+1)D_w(x_{k+1},x_k)+ \sum_{i=1}^{\tau} i D_w(x_{k-\tau+i},x_{k-\tau+i-1})\nonumber\\
&& - \sum_{i=1}^{\tau} i D_w(x_{(k+1)-\tau+i},x_{(k+1)-\tau+i-1}).\label{decom}
\end{eqnarray}
Invoking Lemma \ref{lem2} with $x=x_k$ implies
\begin{equation*}
\Phi(x_{k+1})\leq \Phi(x_k)-\frac{1}{\alpha}D_w(x_k,x_{k+1})-\frac{1}{\alpha}D_w(x_{k+1},x_k) + \Delta_k.
\end{equation*}
This together with \eqref{decom} and the definition of $T_k(x)$ implies
$$T_{k+1}(x)+\left(\frac{1}{\alpha}-L\cdot \ell(\tau+1)\cdot (\tau+1)\right)D_w(x_{k+1},x_k)\leq T_k(x)-\frac{1}{\alpha}D_w(x_k,x_{k+1})\leq T_k(x).$$
By the choice of $\alpha$, it is easy to see that $\frac{1}{\alpha}\geq L\cdot \ell(\tau+1)\cdot (\tau+1)$. Then the relation that $T_{k+1}(x)\leq T_k(x)$ follows immediately, i.e., the function sequence $\{T_k(x)\}_{k\geq 0}$ is nonincreasing.

On the other hand, letting $a_j:=D_w(x_{j+1},x_j)$ and using formulation \eqref{formula1}, we have
\begin{equation*}
\sum_{j=k-\tau}^k D_w(x_{j+1},x_j)=  \sum_{i=1}^{\tau+1} i D_w(x_{k-\tau+i},x_{k-\tau+i-1})- \sum_{i=1}^{\tau} i D_w(x_{(k+1)-\tau+i},x_{(k+1)-\tau+i-1}).
\end{equation*}
Invoking Lemma \ref{lem2} again and using the equality above, it follows that
\begin{equation*}
 T_{k+1}(x) \leq \frac{1}{\alpha}D_w(x,x_k)-\frac{1}{\alpha}D_w(x,x_{k+1})-\frac{1}{\alpha}D_w(x_{k+1},x_k)+L\cdot \ell(\tau+1)\cdot \sum_{i=1}^{\tau+1} i D_w(x_{k-\tau+i},x_{k-\tau+i-1}).
\end{equation*}
Summing the inequality over $k$ from 0 to $K$ leads to
\begin{equation}\label{sum}
\sum_{k=0}^K T_{k+1}(x) \leq \frac{1}{\alpha}D_w(x,x_0)- \frac{1}{\alpha}\sum_{k=0}^K D_w(x_{k+1},x_k)+L\cdot \ell(\tau+1)\cdot \sum_{k=0}^K \sum_{i=1}^{\tau+1} i D_w(x_{k-\tau+i},x_{k-\tau+i-1}).
\end{equation}
Moreover, it is not hard to verify that
$$\sum_{k=0}^K \sum_{i=1}^{\tau+1} i D_w(x_{k-\tau+i},x_{k-\tau+i-1})\leq  \frac{(\tau+1)(\tau+2)}{2}\sum_{k=0}^K D_w(x_{k+1},x_k).$$
This together with \eqref{sum} implies
\begin{eqnarray*}
\sum_{k=0}^K T_{k+1}(x) &\leq& \frac{1}{\alpha}D_w(x,x_0)- \left(\frac{1}{\alpha}- \frac{L\cdot \ell(\tau+1)\cdot(\tau+1)(\tau+2)}{2}\right)\sum_{k=0}^K D_w(x_{k+1},x_k)\\
        &\le& \frac{1}{\alpha}D_w(x,x_0),
\end{eqnarray*}
where the last inequality follows from the choice of $\alpha$.  Since $\{T_k(x)\}_{k\geq 0}$ is nonincreasing, it then follows that
$$T_{K+1}(x)\leq \frac{1}{\alpha \cdot(K+1)}D_w(x,x_0), ~~\forall K\geq 0.$$

\bigskip

\noindent {\bf Proof of Theorem \ref{mainresult}.}
Using the definitions of $L$ and the Bernoulli inequality, i.e., $(1+x)^r\leq 1+rx$ for any $x\geq -1$ and $r\in [0,1]$, we have
$$
\alpha_0\leq \frac{1}{\ell(\tau+1)\cdot(\tau+1)\cdot L}\leq \frac{1}{\sum_{j\in J_k}L_j} ,\quad\forall ~k\geq0.
$$
By Proposition \ref{pro1} and the choice of the step size \eqref{alpha0}, it follows that the generated sequence $\{x^k\}$ is well-defined and it  belongs to $\rint\dom w$.

We next show the convergence rate. For simplicity, we denote
$$\cY_k:= \arg\min_{x\in\cX}  D_w(x,x_k).$$
Since $\cX$ is nonempty and compact, $\cX \bigcap \dom{w} \neq \emptyset$, and $D_w(x,x_k)$ is  lsc with respect to $x$, it follows that $\cY_k$ is nonempty and compact by Weierstrass' theorem. Invoking Lemma \ref{lem2} at $x=\widetilde{x}_k\in \cY_k$ yields
\begin{equation}\label{ineq1}
\Phi(x_{k+1})\leq \Phi^*+\frac{1}{\alpha}D_w(\widetilde{x}_k,x_k)-\frac{1}{\alpha}D_w(\widetilde{x}_k,x_{k+1})-\frac{1}{\alpha}D_w(x_{k+1},x_k)+\Delta_k.
\end{equation}
Since $\widetilde{x}_k\in\cY_k\subseteq \cX$, it is easy to see that
\begin{equation}\label{ineq2}
\inf_{x\in\cX}D_w(x,x_{k+1})\leq D_w(\widetilde{x}_k,x_{k+1}).
\end{equation}
Using the Bregman distance growth condition in assumption B3 implies
\begin{equation*}
D_w(\widetilde{x}_k,x_k)=\inf_{x\in\cX}D_w(x,x_{k})\leq \frac{1}{\mu}(\Phi(x_k)-\Phi^*),
\end{equation*}
and hence
\begin{equation}\label{ineq3}
D_w(\widetilde{x}_k,x_k)\leq p D_w(\widetilde{x}_k,x_{k}) + \frac{q}{\mu}(\Phi(x_k)-\Phi^*)
\end{equation}
for any $p, q>0$ with $p+q=1$. Let
$$
p:=\frac{1}{1+\alpha\mu},~~~q:=\frac{\alpha\mu}{1+\alpha\mu}.
$$
Then it follows from \eqref{ineq1}, \eqref{ineq2}, and \eqref{ineq3} that
\begin{align*}
 &\Phi(x_{k+1})-\Phi^*+ \frac{1}{\alpha}\cdot\inf_{x\in\cX}D_w(x,x_{k+1})\\
\leq  & \frac{q}{\alpha \mu}(\Phi(x^{k})-\Phi^*)+ \frac{p}{\alpha}\cdot\inf_{x\in\cX}D_w(x,x_k)-\frac{1}{\alpha}D_w(x_{k+1},x_k)+\Delta_k\\
=& \frac{q}{\alpha \mu}\left(\Phi(x^{k})-\Phi^* + \frac{1}{\alpha}\cdot\inf_{x\in\cX}D_w(x,x_k)\right)-\frac{1}{\alpha}D_w(x_{k+1},x_k)+\Delta_k,
\end{align*}
where the last equality follows from the fact that $q = \alpha \mu p$. By the definitions of $\Gamma_\alpha(x)$ and $\Delta_k$, we immediately have
$$
\Gamma_{\alpha}(x_{k+1})\leq \frac{1}{1+\alpha\mu}\Gamma_{\alpha}(x_k)-\frac{1}{\alpha}D_w(x_{k+1},x_k)+L\cdot\ell(\tau+1)\cdot\sum_{j=k-\tau}^kD_w(x_{j+1},x_j).
$$
In order to apply Lemma \ref{lem1}, let $V_k:=\Gamma_{\alpha}(x_k)$, $w_k:=D_w(x_{k+1},x_k)$, $a:=\frac{1}{1+\alpha\mu}$, $b:=\frac{1}{\alpha }$, $c:= L\ell(\tau+1)$, and $k_0:=\tau$. When the step size $\alpha$ satisfies \eqref{alpha0}, it is not hard to verify that condition \eqref{condition} holds. Then by Lemma \ref{lem1}, the desired result \eqref{result01} follows immediately.  The result \eqref{result02} is a direct consequence of \eqref{result01}, and  the result \eqref{result03} follows from the Bregman distance growth condition and \eqref{result01}.

It remains to show \eqref{result04}. Taking $\alpha=\alpha_0$ and $Q=L/\mu$ in \eqref{result01}, and noting that
$$\frac{1}{1+\alpha_0\mu}=\left(1+\frac{1}{\ell(\tau+1)Q}\right)^{-\frac{1}{\tau+1}}=\left(1-\frac{1}{1+\ell(1+\tau)Q}\right)^{\frac{1}{\tau+1}},$$
it follows that
\begin{align}
\Gamma_{\alpha_0}(x_k)& \leq\left(\frac{1}{1+\alpha_0\mu}\right)^{k} \Gamma_{\alpha_0}(x_0) \nonumber \\
&= \left(1-\frac{1}{1+\ell(\tau+1)Q}\right)^{\frac{k}{\tau+1}} \Gamma_{\alpha_0}(x_0) \nonumber \\
&\leq \left(1-\frac{1}{[1+\ell(\tau+1)Q](\tau+1)}\right)^k \Gamma_{\alpha_0}(x_0)\label{ber},
\end{align}
where the last inequality follows from the Bernoulli inequality.

\bigskip

\noindent {\bf Proof of Lemma \ref{lem7}.}
First we observe that the following inequality holds:
\begin{eqnarray*}
\sum_{k=0}^K \frac{1}{a^{k+1}} \sum_{j=k-k_0}^k w_j &=& \frac{1}{a}(w_{-k_0} + w_{-k_0+1} + \ldots +w_0 )\nonumber \\
                                                    &&+ \frac{1}{a^2}(w_{-k_0+1} + w_{-k_0+2} + \ldots +w_1)+\ldots\nonumber \\
                                                    && +  \frac{1}{a^{k_0+1}}(w_{0} + w_{1} + \ldots +w_{k_0}) + \ldots\nonumber \\
                                                    && +\frac{1}{a^{K+1}}(w_{K-k_0} + w_{K-k_0} + \ldots +w_K)\nonumber \\
                                                    &\le & (1+\frac{1}{a}+\ldots+\frac{1}{a^{k_0}}) \sum_{k=0}^K\frac{w_k}{a^{k+1}}\nonumber \\
                                                    &=& \frac{1}{1-a}\frac{1-a^{k_0+1}}{a^{k_0}} \sum_{k=0}^K\frac{w_k}{a^{k+1}}. \label{ine-lem3}
\end{eqnarray*}
Dividing both sides of \eqref{shock2} by $a^{k+1}$ and taking the sum from $k=1$ to $k=K$  imply
$$\sum_{k=0}^K\frac{d V_{k+1}^\theta +a V_{k+1}}{a^{k+1}} \leq \left( \frac{c}{1-a}\frac{1-a^{k_0+1}}{a^{k_0}}-b\right)\sum_{k=0}^K\frac{w_k}{a^{k+1}}+\sum_{k=0}^K\frac{V_k}{a^k},\quad\forall K\geq 0.$$
It then follows from \eqref{condition} that
\begin{equation}\label{inequ-lem2}
\sum_{k=0}^K\frac{d V_{k+1}^\theta +a V_{k+1}}{a^{k+1}}\leq  \sum_{k=0}^K\frac{V_k}{a^k},\quad \forall K\geq 0.
\end{equation}
Using the convexity of $V_{k+1}^x$ in $x$ and noting that $\rho=d\theta+a$ and $a+d=1$, we have
$$V_{k+1}^\rho = V_{k+1}^{d\theta+a} \le d V_{k+1}^\theta +a V_{k+1}.$$
This together with \eqref{inequ-lem2} implies
\begin{equation}\label{inequ2-lem2}
\sum_{k=0}^K\frac{V_{k+1}^\rho}{a^{k+1}}\leq  \sum_{k=0}^K\frac{V_k}{a^k},\quad \forall K\geq 0.
\end{equation}
We next show by induction that $V_k \leq 1$ for all $k\geq 0$. First we note that $V_0\le 1$. We next show that $V_{k_0+1}\le 1$ by assuming that $V_k \leq 1$ for $k\leq k_0$. It is clear that $V_k^\rho\geq V_k$ for all $k\le k_0$ since $\rho \in (0, 1]$. Then it follows from \eqref{inequ2-lem2} and the fact that $a^0=1$ that
\begin{equation}\label{inequ3-lem2}
\frac{V_{k_0+1}^\rho}{a^{k_0+1}}+\sum_{k=1}^{k_0}\frac{V_{k} }{a^{k}}\leq \sum_{k=0}^{k_0}\frac{V_{k+1}^\rho}{a^{k+1}}\leq \sum_{k=1}^{k_0}\frac{V_k}{a^k}+V_0.
\end{equation}
Thus we have $V_{k_0+1}^\rho\leq a^{k_0+1}V_0$. Since $a<1$ and $V_0\le1$, it follows immediately that $V^\rho_{k_0+1}\le1$ and hence $V_{k_0+1}\le 1$. In conclusion, we have shown that $V_k \leq 1$ for all $k\geq 0$. Thus, it is easy to see that \eqref{inequ3-lem2} holds when $k_0$ is replaced by any nonnegative integer. We then obtain that $V_k^\rho \leq a^kV_0 $ for all $k\geq 0$.

\bigskip

\noindent {\bf Proof of Theorem \ref{Thm6}.}
Let $\cQ=\RR^d$ and $w(x)=\frac{1}{2}\|x\|^2.$ In this case, assumptions B1, B2, and B4 hold with $\ell(\cdot)$ being the identity function. By Lemma \ref{lem2}, it follows that
\begin{equation}\label{more1}
\Phi(x_{k+1})\leq \Phi(x)+\frac{1}{2\alpha}\|x-x_k\|^2-\frac{1}{2\alpha}\|x-x_{k+1}\|^2-\frac{1}{2\alpha}\|x_{k+1}-x_k\|^2+\Delta_k^1, \quad \forall x\in \RR^d,
\end{equation}
where $\Delta_k^1=\frac{(\tau+1)L}{2}\sum_{j=k-\tau}^k\|x_{j+1}-x_j\|^2.$
The H\"{o}lderian growth condition \eqref{pgrow} at $x=x_{k+1}$ reads as
\begin{equation}\label{more2}
\Phi(x_{k+1})-\Phi^*\geq \frac{\mu}{2}d^{2\theta}(x_{k+1},\cX).
\end{equation}
Let $\bar{x}_k$ be the projection point of $x_k$ onto $\cX$.
Substituting $x=\bar{x}_k$ and \eqref{more2} in \eqref{more1} yields
\begin{equation}\label{ineqad2}
\frac{\mu}{2}d^{2\theta}(x_{k+1},\cX)\leq \frac{1}{2\alpha}d^2(x_k,\cX) -\frac{1}{2\alpha}d^2(x_{k+1},\cX)-\frac{1}{2\alpha}\|x_{k+1}-x_k\|^2+\Delta_k^1.
\end{equation}
Let $V_k:=d^2(x_k,\cX)$, $w_k:=\|x_{k+1}-x_k\|^2$, $\delta:=\frac{\mu}{2} + \frac{1}{2\alpha}$, $a:=\frac{1}{2\alpha\delta}$, $b:=\frac{1}{2\alpha\delta}$, $c:=\frac{(\tau+1)L}{2 \delta}$, and $d:=\frac{\mu }{2\delta}$. Then dividing both sides of \eqref{ineqad2} by $\delta$ yields
\begin{equation*}\label{ineqad4}
d V_{k+1}^\theta + a V_{k+1}\leq a V_k -b w_k+ c\sum_{j=k-\tau}^kw_k.
\end{equation*}
It is not difficult to verify that
$a+d=1, a\in (0,1), \theta\in (0,1], b,c,d>0, V_0 \le 1.$
Moreover, when the step size $\alpha$ satisfies \eqref{alpha0}, condition \eqref{condition} holds. Then, by Lemma \ref{lem7}, we conclude that
$$d^2(x_k,\cX)\leq d^2(x_0,\cX) a^{\frac{k}{\eta}}= d^2(x_0,\cX) \left(\frac{1}{1+\alpha\mu}\right)^{\frac{k}{\eta}},\quad \forall k\geq 0,$$
where
$$\eta :=(1-a)\theta +a=(1-\frac{1}{2\alpha\delta})\theta +\frac{1}{2\alpha\delta}=\frac{\alpha\mu}{1+\alpha\mu}\theta +\frac{1}{1+\alpha\mu}.$$
When $\alpha$ is chosen as $\alpha_0$, the desired result \eqref{asym2} can be derived by using the same argument for obtaining \eqref{ber}.

\end{appendices}
\end{document}